\newcommand{\mb}[1]{\mathbf{#1}}
\newcommand{\msb}[1]{\boldsymbol{#1}}
\newcommand{\mbi}[1]{\boldsymbol{\mathit{#1}}}
\newcommand{\ts}{\textstyle}
\newcommand{\sscs}{\scriptscriptstyle}
\newcommand{\teq}{\triangleq}
\newcommand{\fx}[2]{#1\!\left(#2\right)}
\newcommand{\bfx}[2]{#1\big(#2\big)}
\newcommand{\sfx}[2]{#1(#2)}
\newcommand{\varmin}[2]{\underset{#1}{\text{min}}\left\{#2\right\}}
\newcommand{\bvarmin}[2]{\underset{#1}{\text{min}}\,\big\{#2\!\big\}}
\newcommand{\nbvarmin}[2]{\underset{#1}{\text{min}}\,#2}
\newcommand{\argmin}[2]{\text{arg}\,\varmin{#1}{#2}}
\newcommand{\bargmin}[2]{\text{arg}\,\bvarmin{#1}{#2}}
\newcommand{\nbargmin}[2]{\text{arg}\,\nbvarmin{#1}{#2}}
\newcommand{\varmax}[2]{\underset{#1}{\text{max}}\left\{#2\right\}}
\newcommand{\nbvarmax}[2]{\underset{#1}{\text{max}}\,#2}
\newcommand{\argmax}[2]{\text{arg}\,\varmax{#1}{#2}}
\newcommand{\nbargminmax}[3]{\text{arg}\,\nbvarmin{#1}{}\nbvarmax{#2}{#3}}
\newcommand{\norm}[2]{\left\|#1\right\|_{#2}}
\newcommand{\Bnorm}[2]{\Big\|#1\Big\|_{#2}}
\newcommand{\iter}[2]{#1^{\left(#2\right)}}
\newcommand{\LA}{\mathcal{L}_{\text{A}}}
\newcommand{\R}{\mathsf{R}}
\newcommand{\prox}[2]{\text{prox}_{#1}\!\left(#2\right)}
\newcommand{\bprox}[2]{\text{prox}_{#1}\big(#2\big)}
\newcommand{\diag}[1]{\text{diag}\!\left\{#1\right\}}
\newcommand{\iprod}[2]{\left\langle#1,#2\right\rangle}
\newcommand{\biprod}[2]{\big\langle#1,#2\big\rangle}
\newtheorem{theorem}{Theorem}
\begin{document}

\title{Fast X-Ray CT Image Reconstruction Using the Linearized Augmented Lagrangian Method with Ordered Subsets}
\author{Hung Nien, \textit{Student Member, IEEE}, and Jeffrey A. Fessler, \textit{Fellow, IEEE}\thanks{This work is supported in part by NIH grant R01-HL-098686 and by an equipment donation from Intel Corporation.}\\[5pt]Department of Electrical Engineering and Computer Science\\University of Michigan, Ann Arbor, MI}

\maketitle

\begin{abstract}
The augmented Lagrangian (AL) method that solves convex optimization problems with linear constraints \cite{hestenes:69:mag,powell:69:amf,glowinski:75:slp,gabay:76:ada,eckstein:92:otd} has drawn more attention recently in imaging applications due to its decomposable structure for composite cost functions and empirical fast convergence rate under weak conditions. However, for problems such as X-ray computed tomography (CT) image reconstruction and large-scale sparse regression with ``big data'', where there is no efficient way to solve the inner least-squares problem, the AL method can be slow due to the inevitable iterative inner updates \cite{ramani:12:asb,mcgaffin:12:rma}. In this paper, we focus on solving regularized (weighted) least-squares problems using a linearized variant of the AL method \cite{lin:11:lad,wang:12:tla,yang:13:lal,xiao:13:sal} that replaces the quadratic AL penalty term in the scaled augmented Lagrangian with its separable quadratic surrogate (SQS) function \cite{erdogan:99:osa,kim:12:paf}, thus leading to a much simpler ordered-subsets (OS) \cite{erdogan:99:osa} accelerable splitting-based algorithm, \mbox{OS-LALM}, for X-ray CT image reconstruction. To further accelerate the proposed algorithm, we use a second-order recursive system analysis to design a deterministic downward continuation approach that avoids tedious parameter tuning and provides fast convergence. Experimental results show that the proposed algorithm significantly accelerates the ``convergence'' of X-ray CT image reconstruction with negligible overhead and greatly reduces the OS artifacts \cite{kim:13:osw,kim:13:axr,kim:13:osa} in the reconstructed image when using many subsets for OS acceleration.
\end{abstract}

\section{Introduction} \label{sec:jour-14-fxr:intro}
\IEEEPARstart{S}{tatistical} methods for image reconstruction has been explored extensively for computed tomography (CT) due to the potential of acquiring a CT scan with lower X-ray dose while maintaining image quality. However, the much longer computation time of statistical methods still restrains their applicability in practice. To accelerate statistical methods, many optimization techniques have been investigated. The augmented Lagrangian (AL) method (including its alternating direction variants) \cite{hestenes:69:mag,powell:69:amf,glowinski:75:slp,gabay:76:ada,eckstein:92:otd} is a powerful technique for solving regularized inverse problems using variable splitting. For example, in total-variation (TV) denoising and compressed sensing (CS) problems, the AL method can separate non-smooth $\ell_1$ regularization terms by introducing auxiliary variables, yielding simple penalized least-squares inner problems that are solved efficiently using the fast Fourier transform (FFT) algorithm and proximal mappings such as the soft-thresholding for the $\ell_1$ norm \cite{goldstein:09:tsb,afonso:11:aal}. However, in applications like X-ray CT image reconstruction, the inner least-squares problem is challenging due to the highly shift-variant Hessian caused by the huge dynamic range of the statistical weighting. To solve this problem, Ramani \textit{et al.} \cite{ramani:12:asb} introduced an additional variable that separates the shift-variant and approximately shift-invariant components of the statistically weighted quadratic data-fitting term, leading to a better-conditioned inner least-squares problem that was solved efficiently using the preconditioned conjugate gradient (PCG) method with an appropriate circulant preconditioner. Experimental results showed significant acceleration in $2$D CT \cite{ramani:12:asb}; however, in $3$D CT, due to different cone-beam geometries and scan trajectories, it is more difficult to construct a good preconditioner for the inner least-squares problem, and the method in \cite{ramani:12:asb} has yet to achieve the same acceleration as in $2$D CT. Furthermore, even when a good preconditioner can be found, the iterative PCG solver requires several forward/back-projection operations per outer iteration, which is very time-consuming in $3$D CT, significantly reducing the number of outer-loop image updates one can perform within a given reconstruction time.

The ordered-subsets (OS) algorithm \cite{erdogan:99:osa} is a first-order method with a diagonal preconditioner that uses somewhat conservative step sizes but is easily applicable to $3$D CT. By grouping the projections into $M$ ordered subsets that satisfy the ``subset balance condition'' and updating the image incrementally using the $M$ subset gradients, the OS algorithm effectively performs $M$ times as many image updates per outer iteration as the standard gradient descent method, leading to $M$ times acceleration in early iterations. We can interpret the OS algorithm and its variants as incremental gradient methods \cite{bertsekas:10:igs}; when the subset is chosen randomly with some constraints so that the subset gradient is unbiased and with finite variance, they can also be referred as stochastic gradient methods \cite{robbins:51:asa} in the machine learning literature. Recently, OS variants \cite{kim:13:osw,kim:13:axr} of the fast gradient method \cite{nesterov:83:amf,nesterov:05:smo,beck:09:afi} were also proposed and demonstrated dramatic acceleration (about $M^2$ times in early iterations) in convergence rate over their one-subset counterparts. However, experimental results showed that when $M$ increases, fast OS algorithms seem to have ``larger'' limit cycles and exhibit noise-like OS artifacts in the reconstructed images \cite{kim:13:osa}. This problem is also studied in the machine learning literature. Devolder showed that the error accumulation in fast gradient methods is inevitable when an inexact oracle is used, but it can be reduced by using relaxed momentum, i.e., a growing diagonal majorizer (or equivalently, a diminishing step size), at the cost of slower convergence rate \cite{devolder:11:sfo}. Schmidt \textit{et al.} also showed that an accelerated proximal gradient method is more sensitive to errors in the gradient and proximal mapping calculation of the smooth and non-smooth cost function components, respectively \cite{schmidt:11:cro}.

OS-based algorithms, including the standard one and its fast variants, are not convergent in general (unless relaxation \cite{ahn:03:gci} or incremental majorization \cite{ahn:06:cio} is used, unsurprisingly, at the cost of slower convergence rate) and possibly introduce noise-like artifacts; however, the effective $M$-times image updates using OS is still very promising for AL methods. Recently, Ouyang \textit{et al.} \cite{ouyang:13:sad} proposed a stochastic setting for the alternating direction method of multipliers (ADMM) \cite{eckstein:92:otd,afonso:11:aal} that majorizes smooth data-fitting terms such as the logistic loss in the scaled augmented Lagrangian using a growing diagonal majorizer with stochastic gradients. Unlike methods in \cite{ramani:12:asb,mcgaffin:12:rma}, which introduced an additional variable for better-conditioned inner least-squares problem, the diagonal majorization in stochastic ADMM eliminates the difficult least-squares problem involving the system matrix (e.g., the forward projection matrix in CT) that must be solved in standard ADMM. In fact, only part of the data has to be visited (for evaluating the gradient of a subset of the data) per stochastic ADMM iteration. Therefore, the cost per stochastic ADMM iteration is reduced substantially, and one can run more stochastic ADMM iterations for better reconstruction in a given reconstruction time. However, the growing diagonal majorizer (used to ensure convergence) inevitably slows the convergence of stochastic ADMM from $\fx{O}{1/k}$ to $\sfx{O}{1/\ts\sqrt{Mk}}$, where $k$ denotes the number of effective passes of the data, and $M$ denotes the number of subsets. To achieve significant acceleration, $M$ should be much greater than $k$, and this will increase both the variance of the subset gradients and the cost per effective pass of the data. Therefore, in X-ray CT image reconstruction, stochastic ADMM (with a growing diagonal majorizer) is not efficient.

In this paper, we focus on solving regularized (weighted) least-squares problems using a linearized variant of the AL method \cite{lin:11:lad,wang:12:tla,yang:13:lal,xiao:13:sal}. We majorize the quadratic AL penalty term, instead of the smooth data-fitting term, in the scaled augmented Lagrangian using a fixed diagonal majorizer, thus leading to a much simpler OS-accelerable splitting-based algorithm, \mbox{OS-LALM}, for X-ray CT image reconstruction. To further accelerate the proposed algorithm, we use a second-order recursive system analysis to design a deterministic downward continuation approach that avoids tedious parameter tuning and provides fast convergence. Experimental results show that the proposed algorithm significantly accelerates the ``convergence'' of X-ray CT image reconstruction with negligible overhead and greatly reduces the OS artifacts in the reconstructed image when using many subsets.

The paper is organized as follows. Section~\ref{sec:jour-14-fxr:background} reviews the linearized AL method in a general setting and shows new convergence properties of the linearized AL method with inexact updates. Section~\ref{sec:jour-14-fxr:proposed_alg} derives the proposed OS-accelerable splitting-based algorithm for solving regularized least-squares problems using the linearized AL method and develops a deterministic downward continuation approach for fast convergence without parameter tuning. Section~\ref{sec:jour-14-fxr:impl_detail} considers solving X-ray CT image reconstruction problem with penalized weighted least-squares (PWLS) criterion using the proposed algorithm. Section~\ref{sec:jour-14-fxr:result} reports the experimental results of applying our proposed algorithm to X-ray CT image reconstruction. Finally, we draw conclusions in Section~\ref{sec:jour-14-fxr:conclusion}.

\section{Background} \label{sec:jour-14-fxr:background}
\subsection{Linearized AL method} \label{subsec:jour-14-fxr:lalm}
Consider a general composite convex optimization problem:
\begin{equation} \label{eq:jour-14-fxr:comp_conv_opt_prob}
	\hat{\mb{x}}
	\in
	\bargmin{\mb{x}}{\fx{g}{\mb{Ax}}+\fx{h}{\mb{x}}}
\end{equation}
and its equivalent constrained minimization problem:
\begin{equation} \label{eq:jour-14-fxr:eq_comp_conv_opt_prob}
	\left(\hat{\mb{x}},\hat{\mb{u}}\right)
	\in
	\bargmin{\mb{x},\mb{u}}{\fx{g}{\mb{u}}+\fx{h}{\mb{x}}}
	\text{ s.t. }
	\mb{u}=\mb{Ax} \, ,
\end{equation}
where both $g$ and $h$ are closed and proper convex functions. Typically, $g$ is a weighted quadratic data-fitting term, and $h$ is an edge-preserving regularization term in CT. One way to solve the constrained minimization problem \eqref{eq:jour-14-fxr:eq_comp_conv_opt_prob} is to use the (alternating direction) AL method, which alternatingly minimizes the scaled augmented Lagrangian:
\begin{equation} \label{eq:jour-14-fxr:aug_lagrangian}
	\fx{\LA}{\mb{x},\mb{u},\mb{d};\rho}
	\teq
	\fx{g}{\mb{u}}+\fx{h}{\mb{x}}+\ts\frac{\rho}{2}\norm{\mb{Ax}-\mb{u}-\mb{d}}{2}^2
\end{equation}
with respect to $\mb{x}$ and $\mb{u}$, followed by a gradient ascent of $\mb{d}$, yielding the following AL iterates \cite{eckstein:92:otd,afonso:11:aal}:
\begin{equation} \label{eq:jour-14-fxr:general_al_iterates}
	\begin{cases}
	\iter{\mb{x}}{k+1}
	\in
	\argmin
	{\mb{x}}
	{\fx{h}{\mb{x}}+\ts\frac{\rho}{2}\norm{\mb{Ax}-\iter{\mb{u}}{k}-\iter{\mb{d}}{k}}{2}^2} \\
	\iter{\mb{u}}{k+1}
	\in
	\argmin
	{\mb{u}}
	{\fx{g}{\mb{u}}+\ts\frac{\rho}{2}\norm{\mb{A}\iter{\mb{x}}{k+1}-\mb{u}-\iter{\mb{d}}{k}}{2}^2} \\
	\iter{\mb{d}}{k+1}
	=
	\iter{\mb{d}}{k}-\mb{A}\iter{\mb{x}}{k+1}+\iter{\mb{u}}{k+1} \, ,
	\end{cases}
\end{equation}
where $\mb{d}$ is the scaled Lagrange multiplier of the split variable $\mb{u}$, and $\rho>0$ is the corresponding AL penalty parameter.

In the linearized AL method \cite{lin:11:lad,wang:12:tla,yang:13:lal,xiao:13:sal} (also known as the split inexact Uzawa method \cite{esser:10:agf,zhang:10:bnr,zhang:11:aup}), one replaces the quadratic AL penalty term in the $\mb{x}$-update of \eqref{eq:jour-14-fxr:general_al_iterates}:
\begin{equation} \label{eq:jour-14-fxr:quad_al_penalty}
	\fx{\theta_k}{\mb{x}}\teq\ts\frac{\rho}{2}\norm{\mb{Ax}-\iter{\mb{u}}{k}-\iter{\mb{d}}{k}}{2}^2
\end{equation}
by its separable quadratic surrogate (SQS) function:
\begin{align} \label{eq:jour-14-fxr:sqs_quad_al_penalty}
	&\,\,\,\,\,\,\,\,
	\bfx{\breve{\theta}_k}{\mb{x};\iter{\mb{x}}{k}} \nonumber \\
	&\teq
	\bfx{\theta_k}{\iter{\mb{x}}{k}}
	+
	\biprod{\nabla\bfx{\theta_k}{\iter{\mb{x}}{k}}}{\mb{x}-\iter{\mb{x}}{k}}
	+
	\ts\frac{\rho L}{2}\norm{\mb{x}-\iter{\mb{x}}{k}}{2}^2 \nonumber \\
	&=
	\ts\frac{\rho}{2t}\norm{\mb{x}-\big(\iter{\mb{x}}{k}-t\mb{A}'\big(\mb{A}\iter{\mb{x}}{k}-\iter{\mb{u}}{k}-\iter{\mb{d}}{k}\big)\big)}{2}^2 \nonumber \\
	&\qquad\qquad\qquad\qquad\quad\,\,
	+(\text{constant independent of $\mb{x}$}) \, .
\end{align}
This function satisfies the ``majorization'' condition:
\begin{equation} \label{eq:jour-14-fxr:maj_cond}
	\begin{cases}
	\bfx{\breve{\theta}_k}{\mb{x};\bar{\mb{x}}}\geq\bfx{\theta_k}{\mb{x}} & ,\forall\mb{x},\bar{\mb{x}}\in\text{Dom}\,\theta_k \\
	\bfx{\breve{\theta}_k}{\bar{\mb{x}};\bar{\mb{x}}}=\bfx{\theta_k}{\bar{\mb{x}}}  & ,\forall\bar{\mb{x}}\in\text{Dom}\,\theta_k \, ,
	\end{cases}
\end{equation}
where $L>\norm{\mb{A}}{2}^2=\fx{\lambda_{\text{max}}}{\mb{A}'\mb{A}}$ ensures that $L\mb{I}\succ\mb{A}'\mb{A}$, and $t\teq1/L$. It is trivial to generalize $L$ to a symmetric positive semi-definite matrix $\mb{L}$, e.g., the diagonal matrix used in OS-based algorithms \cite{erdogan:99:osa,kim:13:aos}, and still ensure \eqref{eq:jour-14-fxr:maj_cond}. When $\mb{L}=\mb{A}'\mb{A}$, the linearized AL method reverts to the standard AL method. Majorizing with a diagonal matrix removes the entanglement of $\mb{x}$ introduced by the system matrix $\mb{A}$ and leads to a simpler $\mb{x}$-update. The corresponding linearized AL iterates are as follows \cite{lin:11:lad,wang:12:tla,yang:13:lal,xiao:13:sal}:
\begin{equation} \label{eq:jour-14-fxr:general_lal_iterates}
	\begin{cases}
	\iter{\mb{x}}{k+1}
	\in
	\argmin
	{\mb{x}}
	{\fx{\phi_k}{\mb{x}}\teq\fx{h}{\mb{x}}+\bfx{\breve{\theta}_k}{\mb{x};\iter{\mb{x}}{k}}} \\
	\iter{\mb{u}}{k+1}
	\in
	\argmin
	{\mb{u}}
	{\fx{g}{\mb{u}}+\ts\frac{\rho}{2}\norm{\mb{A}\iter{\mb{x}}{k+1}-\mb{u}-\iter{\mb{d}}{k}}{2}^2} \\
	\iter{\mb{d}}{k+1}
	=
	\iter{\mb{d}}{k}-\mb{A}\iter{\mb{x}}{k+1}+\iter{\mb{u}}{k+1} \, .
	\end{cases}
\end{equation}
The $\mb{x}$-update can be written as the proximal mapping of $h$:
\begin{align} \label{eq:jour-14-fxr:general_lal_iterates_x}
	\iter{\mb{x}}{k+1}
	&\in
	\bprox{\left(\rho^{-1}t\right)h}{\iter{\mb{x}}{k}-t\mb{A}'\big(\mb{A}\iter{\mb{x}}{k}-\iter{\mb{u}}{k}-\iter{\mb{d}}{k}\big)} \nonumber \\
	&=
	\bprox{\left(\rho^{-1}t\right)h}{\iter{\mb{x}}{k}-(\rho^{-1}t)\,\iter{\mb{s}}{k+1}}
	\, ,
\end{align}
where $\text{prox}_f$ denotes the proximal mapping of $f$ defined as:
\begin{equation} \label{eq:jour-14-fxr:def_prox}
	\prox{f}{\mb{z}}
	\teq
	\argmin{\mb{x}}{\fx{f}{\mb{x}}+\ts\frac{1}{2}\norm{\mb{x}-\mb{z}}{2}^2} \, ,
\end{equation}
and
\begin{equation} \label{eq:jour-14-fxr:def_search_dir}
	\iter{\mb{s}}{k+1}
	\teq
	\rho\mb{A}'\big(\mb{A}\iter{\mb{x}}{k}-\iter{\mb{u}}{k}-\iter{\mb{d}}{k}\big)
\end{equation}
denotes the ``search direction'' of the proximal gradient $\mb{x}$-update in \eqref{eq:jour-14-fxr:general_lal_iterates_x}. Furthermore, $\breve{\theta}_k$ can also be written as:
\begin{equation} \label{eq:jour-14-fxr:sqs_al_penalty_proximal_form}
	\bfx{\breve{\theta}_k}{\mb{x};\iter{\mb{x}}{k}}
	=
	\fx{\theta_k}{\mb{x}}+\ts\frac{\rho}{2}\norm{\mb{x}-\iter{\mb{x}}{k}}{\mb{G}}^2 \, ,
\end{equation}
where $\mb{G}\teq L\mb{I}-\mb{A}'\mb{A}\succ 0$ by the definition of $L$. Hence, the linearized AL iterates \eqref{eq:jour-14-fxr:general_lal_iterates} can be represented as a proximal-point variant of the standard AL iterates \eqref{eq:jour-14-fxr:general_al_iterates} (also known as the preconditioned AL iterates) by plugging \eqref{eq:jour-14-fxr:sqs_al_penalty_proximal_form} into \eqref{eq:jour-14-fxr:general_lal_iterates} \cite{esser:10:agf,chambolle:11:afo,ouyang:13:sad}:
\begin{equation} \label{eq:jour-14-fxr:general_lal_iterates_as_prox_al}
	\begin{cases}
	\iter{\mb{x}}{k+1}
	\in
	\argmin
	{\mb{x}}
	{
	\fx{h}{\mb{x}}+\fx{\theta_k}{\mb{x}}+\ts\frac{\rho}{2}\norm{\mb{x}-\iter{\mb{x}}{k}}{\mb{G}}^2
	} \\
	\iter{\mb{u}}{k+1}
	\in
	\argmin
	{\mb{u}}
	{\fx{g}{\mb{u}}+\ts\frac{\rho}{2}\norm{\mb{A}\iter{\mb{x}}{k+1}-\mb{u}-\iter{\mb{d}}{k}}{2}^2} \\
	\iter{\mb{d}}{k+1}
	=
	\iter{\mb{d}}{k}-\mb{A}\iter{\mb{x}}{k+1}+\iter{\mb{u}}{k+1} \, .
	\end{cases}
\end{equation}
\subsection{Convergence properties with inexact updates} \label{subsec:jour-14-fxr:conv_pro}
The linearized AL method \eqref{eq:jour-14-fxr:general_lal_iterates} is convergent for any fixed AL penalty parameter $\rho>0$ for any $\mb{A}$ \cite{lin:11:lad,wang:12:tla,yang:13:lal,xiao:13:sal}, while the standard AL method is convergent if $\mb{A}$ has full column rank \cite[Theorem 8]{eckstein:92:otd}. Furthermore, even if the AL penalty parameter varies every iteration, \eqref{eq:jour-14-fxr:general_lal_iterates} is convergent when $\rho$ is non-decreasing and bounded above \cite{lin:11:lad}. However, all these convergence analyses assume that all updates are exact. In this paper, we are more interested in the linearized AL method with inexact updates. Specifically, instead of the exact linearized AL method \eqref{eq:jour-14-fxr:general_lal_iterates}, we focus on inexact linearized AL methods:
\begin{equation} \label{eq:jour-14-fxr:inexact_lalm_type_1}
	\begin{cases}
	\Bnorm{
	\iter{\mb{x}}{k+1}
	-
	\nbargmin
	{\mb{x}}
	{\fx{\phi_k}{\mb{x}}}
	}{}
	\leq
	\delta_k \\
	\iter{\mb{u}}{k+1}
	\in
	\argmin
	{\mb{u}}
	{\fx{g}{\mb{u}}+\ts\frac{\rho}{2}\norm{\mb{A}\iter{\mb{x}}{k+1}-\mb{u}-\iter{\mb{d}}{k}}{2}^2} \\
	\iter{\mb{d}}{k+1}
	=
	\iter{\mb{d}}{k}-\mb{A}\iter{\mb{x}}{k+1}+\iter{\mb{u}}{k+1} \, ,
	\end{cases}
\end{equation}
where $\phi_k$ was defined in \eqref{eq:jour-14-fxr:general_lal_iterates}, and
\begin{equation} \label{eq:jour-14-fxr:inexact_lalm_type_2}
	\begin{cases}
	\Big|
	\bfx{\phi_k}{\iter{\mb{x}}{k+1}}
	-
	\nbvarmin
	{\mb{x}}
	{\fx{\phi_k}{\mb{x}}}
	\Big|
	\leq
	\varepsilon_k \\
	\iter{\mb{u}}{k+1}
	\in
	\argmin
	{\mb{u}}
	{\fx{g}{\mb{u}}+\ts\frac{\rho}{2}\norm{\mb{A}\iter{\mb{x}}{k+1}-\mb{u}-\iter{\mb{d}}{k}}{2}^2} \\
	\iter{\mb{d}}{k+1}
	=
	\iter{\mb{d}}{k}-\mb{A}\iter{\mb{x}}{k+1}+\iter{\mb{u}}{k+1} \, .
	\end{cases}
\end{equation}
The $\mb{u}$-update can also be inexact; however, for simplicity, we focus on exact updates of $\mb{u}$. Considering an inexact update of $\mb{u}$ is a trivial extension.

Our convergence analysis of the inexact linearized AL method is twofold. First, we show that the equivalent proximal-point variant of the standard AL iterates \eqref{eq:jour-14-fxr:general_lal_iterates_as_prox_al} can be interpreted as a convergent ADMM that solves another equivalent constrained minimization problem of \eqref{eq:jour-14-fxr:comp_conv_opt_prob} with a redundant split (the proof is in the supplementary material):
\begin{multline} \label{eq:jour-14-fxr:eq_comp_conv_opt_prob_redundant}
	\left(\hat{\mb{x}},\hat{\mb{u}},\hat{\mb{v}}\right)
	\in
	\bargmin{\mb{x},\mb{u},\mb{v}}{\fx{g}{\mb{u}}+\fx{h}{\mb{x}}} \\
	\text{ s.t. }
	\mb{u}=\mb{Ax} \text{ and } \mb{v}=\mb{G}^{1/2}\mb{x}\, .
\end{multline}
Therefore, the linearized AL method is a convergent ADMM, and it has all the nice properties of ADMM, including the tolerance of inexact updates \cite[Theorem 8]{eckstein:92:otd}. More formally, we have the following theorem:
\begin{theorem} \label{thm:jour-14-fxr:inexact_lalm_type_1}
Consider a constrained composite convex optimization problem \eqref{eq:jour-14-fxr:eq_comp_conv_opt_prob} where both $g$ and $h$ are closed and proper convex functions. Let $\rho>0$ and $\left\{\delta_k\right\}_{k=0}^{\infty}$ denote a non-negative sequence such that
\begin{equation} \label{eq:jour-14-fxr:error_bound_type_1}
	\sum_{k=0}^{\infty}\delta_k<\infty \, .
\end{equation}
If \eqref{eq:jour-14-fxr:eq_comp_conv_opt_prob} has a solution $\left(\hat{\mb{x}},\hat{\mb{u}}\right)$, then the sequence of updates $\left\{\left(\iter{\mb{x}}{k},\iter{\mb{u}}{k}\right)\right\}_{k=0}^{\infty}$ generated by the inexact linearized AL method \eqref{eq:jour-14-fxr:inexact_lalm_type_1} converges to $\left(\hat{\mb{x}},\hat{\mb{u}}\right)$; otherwise, at least one of the sequences $\left\{\left(\iter{\mb{x}}{k},\iter{\mb{u}}{k}\right)\right\}_{k=0}^{\infty}$ or $\left\{\iter{\mb{d}}{k}\right\}_{k=0}^{\infty}$ diverges.
\end{theorem}
Theorem \ref{thm:jour-14-fxr:inexact_lalm_type_1} shows that the inexact linearized AL method \eqref{eq:jour-14-fxr:inexact_lalm_type_1} converges if the error $\delta_k$ is absolutely summable. However, it does not describe how fast the algorithm converges and more importantly, how inexact updates affect the convergence rate. This leads to the second part of our convergence analysis.

In this part, we rely on the equivalence between the linearized AL method and the Chambolle-Pock first-order primal-dual algorithm (CPPDA) \cite{chambolle:11:afo}. Consider a minimax problem:
\begin{equation} \label{eq:jour-14-fxr:eq_comp_conv_opt_prob_minimax}
	\left(\hat{\mb{z}},\hat{\mb{x}}\right)
	\in
	\nbargminmax
	{\mb{z}}{\mb{x}}
	{\fx{\Omega}{\mb{z},\mb{x}}} \, ,
\end{equation}
where
\begin{equation} \label{eq:jour-14-fxr:def_minimax_fx}
	\fx{\Omega}{\mb{z},\mb{x}}
	\teq
	\iprod{-\mb{A}'\mb{z}}{\mb{x}}+\fx{g^*}{\mb{z}}-\fx{h}{\mb{x}} \, ,
\end{equation}
and $f^*$ denotes the convex conjugate of a function $f$. Note that $g^{**}=g$ and $h^{**}=h$ since both $g$ and $h$ are closed, proper, and convex. The sequence of updates $\left\{\left(\iter{\mb{z}}{k},\iter{\mb{x}}{k}\right)\right\}_{k=0}^{\infty}$ generated by the CPPDA iterates:
\begin{equation} \label{eq:jour-14-fxr:general_cppd_iterates}
	\begin{cases}
	\iter{\mb{x}}{k+1}
	\in
	\prox{\sigma h}{\iter{\mb{x}}{k}-\sigma\mb{A}'\iter{\bar{\mb{z}}}{k}} \\
	\iter{\mb{z}}{k+1}
	\in
	\prox{\tau g^*}{\iter{\mb{z}}{k}+\tau\mb{A}\iter{\mb{x}}{k+1}} \\
	\iter{\bar{\mb{z}}}{k+1}
	=
	\iter{\mb{z}}{k+1}+\left(\iter{\mb{z}}{k+1}-\iter{\mb{z}}{k}\right)
	\end{cases}
\end{equation}
converges to a saddle-point $\left(\hat{\mb{z}},\hat{\mb{x}}\right)$ of \eqref{eq:jour-14-fxr:eq_comp_conv_opt_prob_minimax}, and the non-negative primal-dual gap $\bfx{\Omega}{\mb{z}_k,\hat{\mb{x}}}-\bfx{\Omega}{\hat{\mb{z}},\mb{x}_k}$ converges to zero with rate $\fx{O}{1/k}$ \cite[Theorem 1]{chambolle:11:afo}, where $\mb{x}_k$ and $\mb{z}_k$ denote the arithemetic mean of all previous $\mb{x}$- and $\mb{z}$-updates up to the $k$th iteration, respectively. Since the CPPDA iterates \eqref{eq:jour-14-fxr:general_cppd_iterates} solve the minimax problem \eqref{eq:jour-14-fxr:eq_comp_conv_opt_prob_minimax}, they also solve the primal problem:
\begin{equation} \label{eq:jour-14-fxr:eq_comp_conv_opt_prob_minimax_primal}
	\hat{\mb{z}}
	\in
	\argmin{\mb{z}}{\fx{h^*}{-\mb{A}'\mb{z}}+\fx{g^*}{\mb{z}}}
\end{equation}
and the dual problem:
\begin{equation} \label{eq:jour-14-fxr:eq_comp_conv_opt_prob_minimax_dual}
	\hat{\mb{x}}
	\in
	\argmax{\mb{x}}{-\fx{g}{\mb{Ax}}-\fx{h}{\mb{x}}}
\end{equation}
of \eqref{eq:jour-14-fxr:eq_comp_conv_opt_prob_minimax}, and the latter happens to be the composite convex optimization problem \eqref{eq:jour-14-fxr:comp_conv_opt_prob}. Therefore, the CPPDA iterates \eqref{eq:jour-14-fxr:general_cppd_iterates} solve \eqref{eq:jour-14-fxr:comp_conv_opt_prob} with rate $\fx{O}{1/k}$ in an ergodic sense. Furthermore, Chambolle \textit{et al.} showed that their proposed primal-dual algorithm is equivalent to a preconditioned ADMM solving \eqref{eq:jour-14-fxr:eq_comp_conv_opt_prob} with a preconditioner $\mb{M}\teq\sigma^{-1}\mb{I}-\tau\mb{A}'\mb{A}$ provided that $0<\sigma\tau<1/\norm{\mb{A}}{2}^2$ \cite[Section 4.3]{chambolle:11:afo}. Letting $\iter{\mb{z}}{k}=-\tau\iter{\mb{d}}{k}$ and choosing $\sigma=\rho^{-1}t$ and $\tau=\rho$, the CPPDA iterates \eqref{eq:jour-14-fxr:general_cppd_iterates} reduce to \eqref{eq:jour-14-fxr:general_lal_iterates_as_prox_al} and hence, the linearized AL method \eqref{eq:jour-14-fxr:general_lal_iterates}. This suggests that we can measure the convergence rate of the linearized AL method using the primal-dual gap that is vanishing ergodically with rate $\fx{O}{1/k}$. Finally, to take inexact updates into account, we apply the error analysis technique developed in \cite{schmidt:11:cro} to the convergence rate analysis of CPPDA, leading to the following theorem (the proof is in the supplementary material):
\begin{theorem} \label{thm:jour-14-fxr:inexact_lalm_type_2}
Consider a minimax problem \eqref{eq:jour-14-fxr:eq_comp_conv_opt_prob_minimax} where both $g$ and $h$ are closed and proper convex functions. Suppose it has a saddle-point $\left(\hat{\mb{z}},\hat{\mb{x}}\right)$, where $\hat{\mb{z}}$ and $\hat{\mb{x}}$ are the solutions of the primal problem \eqref{eq:jour-14-fxr:eq_comp_conv_opt_prob_minimax_primal} and the dual problem \eqref{eq:jour-14-fxr:eq_comp_conv_opt_prob_minimax_dual} of \eqref{eq:jour-14-fxr:eq_comp_conv_opt_prob_minimax}, respectively. Let $\rho>0$ and $\left\{\varepsilon_k\right\}_{k=0}^{\infty}$ denote a non-negative sequence such that
\begin{equation} \label{eq:jour-14-fxr:error_bound_type_2}
	\sum_{k=0}^{\infty}\sqrt{\varepsilon_k}<\infty \, .
\end{equation}
Then, the sequence of updates $\left\{\left(-\rho\iter{\mb{d}}{k},\iter{\mb{x}}{k}\right)\right\}_{k=0}^{\infty}$ generated by the inexact linearized AL method \eqref{eq:jour-14-fxr:inexact_lalm_type_2} is a bounded sequence that converges to $\left(\hat{\mb{z}},\hat{\mb{x}}\right)$, and the primal-dual gap of $\left(\mb{z}_k,\mb{x}_k\right)$ has the following bound:
\begin{equation} \label{eq:jour-14-fxr:primal_dual_gap_bound_with_error}
	\bfx{\Omega}{\mb{z}_k,\hat{\mb{x}}}-\bfx{\Omega}{\hat{\mb{z}},\mb{x}_k}
	\leq
	\frac{\left(C+2A_k+\sqrt{B_k}\right)^2}{k} \, ,
\end{equation}
where $\mb{z}_k\teq\frac{1}{k}\sum_{j=1}^k\big({-\rho}\iter{\mb{d}}{j}\big)$, $\mb{x}_k\teq\frac{1}{k}\sum_{j=1}^k\iter{\mb{x}}{j}$,
\begin{equation} \label{eq:jour-14-fxr:def_C}
	C\teq\frac{\norm{\iter{\mb{x}}{0}-\hat{\mb{x}}}{2}}{\sqrt{2\rho^{-1}t}}+\frac{\norm{\big({-\rho}\iter{\mb{d}}{0}\big)-\hat{\mb{z}}}{2}}{\sqrt{2\rho}} \, ,
\end{equation}
\begin{equation} \label{eq:jour-14-fxr:def_Ak}
	A_k\teq\sum_{j=1}^k\sqrt{\frac{\varepsilon_{j-1}}{\big(1-t\norm{\mb{A}}{2}^2\big)\rho^{-1}t}} \, ,
\end{equation}
and
\begin{equation} \label{eq:jour-14-fxr:def_Bk}
	B_k\teq\sum_{j=1}^k\varepsilon_{j-1} \, .
\end{equation}
\end{theorem}
Theorem \ref{thm:jour-14-fxr:inexact_lalm_type_2} shows that the inexact linearized AL method \eqref{eq:jour-14-fxr:inexact_lalm_type_2} converges with rate $\fx{O}{1/k}$ if the square root of the error $\varepsilon_k$ is absolutely summable. In fact, even if $\left\{\sqrt{\varepsilon_k}\right\}_{k=0}^{\infty}$ is not absolutely summable, say, $\sqrt{\varepsilon_k}$ decreases as $\fx{O}{1/k}$, $A_k$ grows as $\fx{O}{\log k}$ (note that $B_k$ always grows slower than $A_k$), and the primal-dual gap converges to zero in $\fx{O}{\log^2 k/k}$. To obtain convergence of the primal-dual gap, a necessary condition is that the partial sum of $\left\{\sqrt{\varepsilon_k}\right\}_{k=0}^{\infty}$ grows no faster than $\bfx{o}{\sqrt{k}}$.

The primal-dual gap convergence bound above is measured at the average point $\left({-\rho}\mb{d}_k,\mb{x}_k\right)$ of the update trajectory. In practice, the primal-dual gap of $\left({-\rho}\iter{\mb{d}}{k},\iter{\mb{x}}{k}\right)$ converges much faster than that. Minimizing the constant in \eqref{eq:jour-14-fxr:primal_dual_gap_bound_with_error} need not provide the fastest convergence rate of the linearized AL method. However, the $\rho$-, $t$-, and $\varepsilon_k$-dependence in \eqref{eq:jour-14-fxr:primal_dual_gap_bound_with_error} suggests how these factors affect the convergence rate of the linearized AL method. Finally, although we consider only one variable split in our derivation, it is easy to extend our proofs to support multiple variable splits by using the variable splitting scheme in \cite{afonso:11:aal}. Hence, when $M=1$ and $g$ has a simple proximal mapping, Theorem \ref{thm:jour-14-fxr:inexact_lalm_type_2} suggests that the linearized AL method might be more efficient than stochastic ADMM \cite{ouyang:13:sad} because no growing diagonal majorizer is required in the linearized AL method. However, unlike stochastic ADMM, the linearized AL method is not OS-accelerable in general because the $\mb{d}$-update takes a full forward projection. We use the linearized AL method for analysis and to motivate the proposed algorithm in Section~\ref{sec:jour-14-fxr:proposed_alg}, but it is not recommended for practical implementation in CT reconstruction. By restricting $g$ to be a quadratic loss function, we show that the linearized AL method becomes OS-accelerable.

\section{Proposed algorithm} \label{sec:jour-14-fxr:proposed_alg}
\subsection{\mbox{OS-LALM}: an OS-accelerable splitting-based algorithm} \label{subsec:jour-14-fxr:os_lalm}
In this section, we restrict $g$ to be a quadratic loss function, i.e., $\fx{g}{\mb{u}}\teq\frac{1}{2}\norm{\mb{y}-\mb{u}}{2}^2$, and then the minimization problem \eqref{eq:jour-14-fxr:comp_conv_opt_prob} becomes a regularized least-squares problem:
\begin{equation} \label{eq:jour-14-fxr:penalized_ls_prob}
	\hat{\mb{x}}
	\in
	\argmin{\mb{x}}{\fx{\Psi}{\mb{x}}\teq\ts\frac{1}{2}\norm{\mb{y}-\mb{Ax}}{2}^2+\fx{h}{\mb{x}}} \, .
\end{equation}
Let $\fx{\ell}{\mb{x}}\teq\fx{g}{\mb{Ax}}$ denote the quadratic data-fitting term in \eqref{eq:jour-14-fxr:penalized_ls_prob}. We assume that $\ell$ is suitable for OS acceleration; i.e., $\ell$ can be decomposed into $M$ smaller quadratic functions $\ell_1,\ldots,\ell_M$ satisfying the ``subset balance condition'' \cite{erdogan:99:osa}:
\begin{equation} \label{eq:jour-14-fxr:subset_balance_cond}
	\fx{\nabla \ell}{\mb{x}}\approx M\fx{\nabla \ell_1}{\mb{x}}\approx\cdots\approx M\fx{\nabla \ell_M}{\mb{x}} \, ,
\end{equation}
so that the subset gradients approximate the full gradient of $\ell$.

Since $g$ is quadratic, its proximal mapping is linear. The $\mb{u}$-update in the linearized AL method \eqref{eq:jour-14-fxr:general_lal_iterates} has the following simple closed-form solution:
\begin{equation} \label{eq:jour-14-fxr:lalm_pls_u}
	\iter{\mb{u}}{k+1}
	=
	\ts\frac{\rho}{\rho+1}\big(\mb{A}\iter{\mb{x}}{k+1}-\iter{\mb{d}}{k}\big)
	+
	\ts\frac{1}{\rho+1}\mb{y} \, .
\end{equation}
Combining \eqref{eq:jour-14-fxr:lalm_pls_u} with the $\mb{d}$-update of \eqref{eq:jour-14-fxr:general_lal_iterates} yields the identity
\begin{equation} \label{eq:jour-14-fxr:u_d_identity}
	\iter{\mb{u}}{k+1}+\rho\iter{\mb{d}}{k+1}=\mb{y}
\end{equation}
if we initialize $\mb{d}$ as $\iter{\mb{d}}{0}=\rho^{-1}\left(\mb{y}-\iter{\mb{u}}{0}\right)$. Letting $\tilde{\mb{u}}\teq\mb{u}-\mb{y}$ denote the split residual and substituting \eqref{eq:jour-14-fxr:u_d_identity} into \eqref{eq:jour-14-fxr:general_lal_iterates} leads to the following simplified linearized AL iterates:
\begin{equation} \label{eq:jour-14-fxr:simplified_lal_iterates}
	\begin{cases}
	\iter{\mb{s}}{k+1}
	=
	\mb{A}'\big(\rho\big(\mb{A}\iter{\mb{x}}{k}-\mb{y}\big)+\left(1-\rho\right)\iter{\tilde{\mb{u}}}{k}\big) \\
	\iter{\mb{x}}{k+1}
	\in
	\prox{\left(\rho^{-1}t\right)h}{\iter{\mb{x}}{k}-(\rho^{-1}t)\,\iter{\mb{s}}{k+1}} \\
	\iter{\tilde{\mb{u}}}{k+1}
	=
	\ts\frac{\rho}{\rho+1}\big(\mb{A}\iter{\mb{x}}{k+1}-\mb{y}\big)
	+
	\ts\frac{1}{\rho+1}\iter{\tilde{\mb{u}}}{k} \, .
	\end{cases}
\end{equation}
The net computational complexity of \eqref{eq:jour-14-fxr:simplified_lal_iterates} per iteration reduces to one multiplication by $\mb{A}$, one multiplication by $\mb{A}'$, and one proximal mapping of $h$ that often can be solved non-iteratively or solved iteratively without using $\mb{A}$ or $\mb{A}'$. Since the gradient of $\ell$ is $\mb{A}'\left(\mb{Ax}-\mb{y}\right)$, letting $\mb{g}\teq\mb{A}'\tilde{\mb{u}}$ (a back-projection of the split residual) denote the split gradient, we can rewrite \eqref{eq:jour-14-fxr:simplified_lal_iterates} as:
\begin{equation} \label{eq:jour-14-fxr:lalm_iterates}
	\begin{cases}
	\iter{\mb{s}}{k+1}
	=
	\rho\bfx{\nabla\ell}{\iter{\mb{x}}{k}}
	+
	\left(1-\rho\right)\iter{\mb{g}}{k}\\
	\iter{\mb{x}}{k+1}
	\in
	\prox{\left(\rho^{-1}t\right)h}{\iter{\mb{x}}{k}-(\rho^{-1}t)\,\iter{\mb{s}}{k+1}} \\
	\iter{\mb{g}}{k+1}
	=
	\ts\frac{\rho}{\rho+1}\bfx{\nabla\ell}{\iter{\mb{x}}{k+1}}
	+
	\ts\frac{1}{\rho+1}\iter{\mb{g}}{k} \, .
	\end{cases}
\end{equation}
We call \eqref{eq:jour-14-fxr:lalm_iterates} the gradient-based linearized AL method because only the gradients of $\ell$ are used to perform the updates, and the net computational complexity of \eqref{eq:jour-14-fxr:lalm_iterates} per iteration becomes one gradient evaluation of $\ell$ and one proximal mapping of $h$.

We interpret the gradient-based linearized AL method \eqref{eq:jour-14-fxr:lalm_iterates} as a generalized proximal gradient descent of a regularized least-squares cost function $\Psi$ with step size $\rho^{-1}t$ and search direction $\iter{\mb{s}}{k+1}$ that is a linear average of the gradient and split gradient of $\ell$. A smaller $\rho$ can lead to a larger step size. When $\rho=1$, \eqref{eq:jour-14-fxr:lalm_iterates} happens to be the proximal gradient method or the iterative shrinkage/thresholding algorithm (ISTA) \cite{daubechies:04:ait}. In other words, by using the linearized AL method, we can arbitrarily increase the step size of the proximal gradient method by decreasing $\rho$, thanks to the simple $\rho$-dependent correction of the search direction in \eqref{eq:jour-14-fxr:lalm_iterates}. To have a concrete example, suppose all updates are exact, i.e., $\varepsilon_k=0$ for all $k$. From \eqref{eq:jour-14-fxr:u_d_identity} and Theorem \ref{thm:jour-14-fxr:inexact_lalm_type_2}, we have $-\rho\iter{\mb{d}}{k}=\iter{\mb{u}}{k}-\mb{y}\to\mb{A}\hat{\mb{x}}-\mb{y}=\hat{\mb{z}}$ as $k\to\infty$. Furthermore, $\left(-\rho\iter{\mb{d}}{0}\right)-\hat{\mb{z}}=\iter{\mb{u}}{0}-\mb{A}\hat{\mb{x}}$. Therefore, with a reasonable initialization, e.g., $\iter{\mb{u}}{0}=\mb{A}\iter{\mb{x}}{0}$ and consequently, $\iter{\mb{g}}{0}=\bfx{\nabla\ell}{\iter{\mb{x}}{0}}$, the constant $C$ in \eqref{eq:jour-14-fxr:def_C} can be rewritten as a function of $\rho$:
\begin{equation} \label{eq:jour-14-fxr:def_C_new}
	\fx{C}{\rho}
	=
	\frac{\norm{\iter{\mb{x}}{0}-\hat{\mb{x}}}{2}}{\sqrt{2\rho^{-1}t}}
	+
	\frac{\norm{\mb{A}\big(\iter{\mb{x}}{0}-\hat{\mb{x}}\big)}{2}}{\sqrt{2\rho}} \, .
\end{equation}
This constant achieves its minimum at
\begin{equation} \label{eq:jour-14-fxr:opt_rho_minimize_C}
	\rho_{\text{opt}}
	=
	\frac{\norm{\mb{A}\big(\iter{\mb{x}}{0}-\hat{\mb{x}}\big)}{2}}{\sqrt{L}\norm{\iter{\mb{x}}{0}-\hat{\mb{x}}}{2}}
	\leq
	1 \, ,
\end{equation}
and it suggests that unity might be a reasonable upper bound on $\rho$ for fast convergence. When the majorization is loose, i.e., $L\gg\norm{\mb{A}}{2}^2$, then $\rho_{\text{opt}}\ll 1$. In this case, the first term in \eqref{eq:jour-14-fxr:def_C_new} dominates $C$ for $\rho_{\text{opt}}<\rho\leq 1$, and the upper bound of the primal-dual gap becomes
\begin{equation} \label{eq:jour-14-fxr:primal_dual_gap_bound_with_error_new}
	\bfx{\Omega}{\mb{z}_k,\hat{\mb{x}}}-\bfx{\Omega}{\hat{\mb{z}},\mb{x}_k}
	\leq
	\frac{C^2}{k}
	\approx
	\fx{O}{\frac{1}{\rho^{-1}k}} \, .
\end{equation}
That is, comparing to the proximal gradient method ($\rho=1$), the convergence rate (bound) of our proposed algorithm is accelerated by a factor of $\rho^{-1}$ for $\rho_{\text{opt}}<\rho\leq 1$!

Finally, since the proposed gradient-based linearized AL method \eqref{eq:jour-14-fxr:lalm_iterates} requires only the gradients of $\ell$ to perform the updates, it is OS-accelerable! For OS acceleration, we simply replace $\nabla\ell$ in \eqref{eq:jour-14-fxr:lalm_iterates} with $M\nabla\ell_m$ using the approximation \eqref{eq:jour-14-fxr:subset_balance_cond} and incrementally perform \eqref{eq:jour-14-fxr:lalm_iterates} for $M$ times as a complete iteration, thus leading to the final proposed OS-accelerable linearized AL method (\mbox{OS-LALM}):
\begin{equation} \label{eq:jour-14-fxr:os_lalm_iterates}
	\begin{cases}
	\iter{\mb{s}}{k,m+1}
	=
	\rho M\bfx{\nabla\ell_m}{\iter{\mb{x}}{k,m}}
	+
	\left(1-\rho\right)\iter{\mb{g}}{k,m}\\
	\iter{\mb{x}}{k,m+1}
	\in
	\prox{\left(\rho^{-1}t\right)h}{\iter{\mb{x}}{k,m}-(\rho^{-1}t)\,\iter{\mb{s}}{k,m+1}} \\
	\iter{\mb{g}}{k,m+1}
	=
	\ts\frac{\rho}{\rho+1}M\bfx{\nabla\ell_{m+1}}{\iter{\mb{x}}{k,m+1}}
	+
	\ts\frac{1}{\rho+1}\iter{\mb{g}}{k,m}
	\end{cases}
\end{equation}
with $\iter{\mb{c}}{k,M+1}=\iter{\mb{c}}{k+1}=\iter{\mb{c}}{k+1,1}$ for $\mb{c}\in\left\{\mb{s},\mb{x},\mb{g}\right\}$ and $\ell_{M+1}=\ell_1$. Like typical OS-based algorithms, this algorithm is convergent when $M=1$, i.e., \eqref{eq:jour-14-fxr:lalm_iterates}, but is not guaranteed to converge for $M>1$. When $M>1$, updates generated by OS-based algorithms enter a ``limit cycle'' in which updates stop approaching the optimum, and visible OS artifacts might be observed in the reconstructed image, depending on $M$.
\subsection{Deterministic downward continuation} \label{subsec:jour-14-fxr:down_cont}
One drawback of the AL method with a fixed AL penalty parameter $\rho$ is the difficulty of finding the value that provides the fastest convergence. For example, although the optimal AL penalty parameter $\rho_{\text{opt}}$ in \eqref{eq:jour-14-fxr:opt_rho_minimize_C} minimizes the constant $C$ in \eqref{eq:jour-14-fxr:def_C_new} that governs the convergence rate of the primal-dual gap, one cannot know its value beforehand because it depends on the solution $\hat{\mb{x}}$ of the problem. Intuitively, a smaller $\rho$ is better because it leads to a larger step size. However, when the step size is too large, one can encounter overshoots and oscillations that slow down the convergence rate at first and when nearing the optimum. In fact, $\rho_\text{opt}$ in \eqref{eq:jour-14-fxr:opt_rho_minimize_C} also suggests that $\rho$ should not be arbitrarily small. Rather than estimating $\rho_{\text{opt}}$ heuristically, we focus on using an iteration-dependent $\rho$, i.e., a continuation approach, for acceleration.

The classic continuation approach increases $\rho$ as the algorithm proceeds so that the previous iterate can serve as a warm start for the subsequent worse-conditioned but more penalized inner minimization problem \cite[Proposition 4.2.1]{bertsekas:99:np}. However, in classic continuation approaches such as \cite{lin:11:lad}, one must specify both the initial value and the update rules of $\rho$. This introduces even more parameters to be tuned. In this paper, unlike classic continuation approaches, we consider a downward continuation approach. The intuition is that, for a fixed $\rho$, the step length $\big\|\iter{\mb{x}}{k+1}-\iter{\mb{x}}{k}\big\|$ is typically a decreasing sequence because the gradient norm vanishes as we approach the optimum, and an increasing sequence $\rho_k$ (i.e., a diminishing step size) would aggravate the shrinkage of step length and slow down the convergence rate. In contrast, a decreasing sequence $\rho_k$ can compensate for step length shrinkage and accelerate convergence. Of course, $\rho_k$ cannot decrease too fast; otherwise, the soaring step size might make the algorithm unstable or even divergent. To design a ``good'' decreasing sequence $\rho_k$ for ``effective'' acceleration, we first analyze how our proposed algorithm (the one-subset version \eqref{eq:jour-14-fxr:lalm_iterates} for simplicity) behaves for different values of $\rho$.

Consider a very simple quadratic problem:
\begin{equation} \label{eq:jour-14-fxr:quad_problem}
	\hat{\mb{x}}
	\in
	\nbargmin{\mb{x}}{\ts\frac{1}{2}\norm{\mb{Ax}}{2}^2} \, .
\end{equation}
It is just an instance of \eqref{eq:jour-14-fxr:penalized_ls_prob} with $h=0$ and $\mb{y}=\mb{0}$. One trivial solution of \eqref{eq:jour-14-fxr:quad_problem} is $\hat{\mb{x}}=\mb{0}$. To ensure a unique solution, we assume that $\mb{A}'\mb{A}$ is positive definite (for this analysis only). Let $\mb{A}'\mb{A}$ have eigenvalue decomposition $\mb{V}\msb{\Lambda}\mb{V}'$, where \mbox{$\msb{\Lambda}\teq\diag{\lambda_i}$} and $\mu=\lambda_1\leq\dots\leq\lambda_n=L$. The updates generated by \eqref{eq:jour-14-fxr:lalm_iterates} that solve \eqref{eq:jour-14-fxr:quad_problem} can be written as
\begin{equation} \label{eq:jour-14-fxr:lalm_iterates_qp}
	\begin{cases}
	\iter{\mb{x}}{k+1}
	=
	\iter{\mb{x}}{k}-(1/L)\big(\mb{V}\msb{\Lambda}\mb{V}'\iter{\mb{x}}{k}+(\rho^{-1}-1)\,\iter{\mb{g}}{k}\big) \\
	\iter{\mb{g}}{k+1}
	=
	\frac{\rho}{\rho+1}\mb{V}\msb{\Lambda}\mb{V}'\iter{\mb{x}}{k+1}
	+
	\frac{1}{\rho+1}\iter{\mb{g}}{k} \, .
	\end{cases}
\end{equation}
Furthermore, letting $\bar{\mb{x}}=\mb{V}'\mb{x}$ and $\bar{\mb{g}}=\mb{V}'\mb{g}$, the linear system can be further diagonalized, and we can represent the $i$th components of $\bar{\mb{x}}$ and $\bar{\mb{g}}$ as
\begin{equation} \label{eq:jour-14-fxr:ith_lalm_iterates_qp}
	\begin{cases}
	\iter{\bar{x}_i}{k+1}
	=
	\iter{\bar{x}_i}{k}-(1/L)\big(\lambda_i\iter{\bar{x}_i}{k}+(\rho^{-1}-1)\,\iter{\bar{g}_i}{k}\big) \\
	\iter{\bar{g}_i}{k+1}
	=
	\frac{\rho}{\rho+1}\lambda_i\iter{\bar{x}_i}{k+1}
	+
	\frac{1}{\rho+1}\iter{\bar{g}_i}{k} \, .
	\end{cases}
\end{equation}
Solving this system of recurrence relations of $\bar{x}_i$ and $\bar{g}_i$, one can show that both $\bar{x}_i$ and $\bar{g}_i$ satisfy a second-order recursive system determined by the characteristic polynomial:
\begin{equation} \label{eq:jour-14-fxr:char_poly_qp}
	\left(1+\rho\right)r^2-2\left(1-\lambda_i/L+\rho/2\right)r+\left(1-\lambda_i/L\right) \, .
\end{equation}
We analyze the roots of this polynomial to examine the convergence rate.

When $\rho=\rho_i^{\text{c}}$, where
\begin{equation} \label{eq:jour-14-fxr:critical_value_qp}
	\rho_i^{\text{c}}
	\teq
	2\sqrt{\frac{\lambda_i}{L}\left(1-\frac{\lambda_i}{L}\right)}
	\in
	(0,1] \, ,
\end{equation}
the characteristic equation has repeated roots. Hence, the system is critically damped, and $\bar{x}_i$ and $\bar{g}_i$ converge linearly to zero with convergence rate
\begin{equation} \label{eq:jour-14-fxr:critical_damp_rate_qp}
	r_i^{\text{c}}
	=
	\frac{1-\lambda_i/L+\rho_i^{\text{c}}/2}{1+\rho_i^{\text{c}}}
	=
	\sqrt{\frac{1-\lambda_i/L}{1+\rho_i^{\text{c}}}} \, .
\end{equation}
When $\rho>\rho_i^{\text{c}}$, the characteristic equation has distinct real roots. Hence, the system is over-damped, and $\bar{x}_i$ and $\bar{g}_i$ converge linearly to zero with convergence rate that is governed by the dominant root
\begin{equation} \label{eq:jour-14-fxr:over_damp_rate_qp}
	\fx{r_i^{\text{o}}}{\rho}
	=
	\frac{1-\lambda_i/L+\rho/2+\sqrt{\rho^2/4-\lambda_i/L\left(1-\lambda_i/L\right)}}{1+\rho} \, .
\end{equation}
It is easy to check that $\fx{r_i^{\text{o}}}{\rho_i^{\text{c}}}=r_i^{\text{c}}$, and $r_i^{\text{o}}$ is non-decreasing. This suggests that the critically damped system always converges faster than the over-damped system. Finally, when $\rho<\rho_i^{\text{c}}$, the characteristic equation has complex roots. In this case, the system is under-damped, and $\bar{x}_i$ and $\bar{g}_i$ converge linearly to zero with convergence rate
\begin{equation} \label{eq:jour-14-fxr:under_damp_rate_qp}
	\fx{r_i^{\text{u}}}{\rho}
	=
	\frac{1-\lambda_i/L+\rho/2}{1+\rho} \, ,
\end{equation}
and oscillate at the damped frequency $\psi_i/(2\pi)$, where
\begin{equation} \label{eq:jour-14-fxr:under_damp_cosine_qp}
	\text{cos}\psi_i
	=
	\frac{1-\lambda_i/L+\rho/2}{\sqrt{(1+\rho)(1-\lambda_i/L)}}
	\approx
	\sqrt{1-\lambda_i/L}
\end{equation}
when $\rho\approx0$. Furthermore, by the small angle approximation: $\text{cos}\sqrt{\theta}\approx1-\theta/2\approx\sqrt{1-\theta}$, if $\lambda_i\gg L$, $\psi_i\approx\sqrt{\lambda_i/L}$. Again, $\fx{r_i^{\text{u}}}{\rho_i^{\text{c}}}=r_i^{\text{c}}$, but $r_i^{\text{u}}$ behaves differently from $r_i^{\text{o}}$. Specifically, $r_i^{\text{u}}$ is non-increasing if $\lambda_i/L<1/2$, and it is non-decreasing otherwise. This suggests that the critically damped system converges faster than the under-damped system if $\lambda_i/L<1/2$, but it can be slower otherwise. In sum, the critically damped system is optimal for those eigencomponents with smaller eigenvalues (i.e., $\lambda_i<L/2$), while for eigencomponents with larger eigenvalues (i.e., $\lambda_i>L/2$), the under-damped system is optimal.

In practice, the asymptotic convergence rate of the system is dominated by the smallest eigenvalue $\lambda_1=\mu$. As the algorithm proceeds, only the component oscillating at the frequency $\psi_1/(2\pi)$ persists. Therefore, to achieve the fastest asymptotic convergence rate, we would like to choose
\begin{equation} \label{eq:jour-14-fxr:opt_rho_qp}
	\rho^{\star}
	=
	\rho_1^{\text{c}}
	=
	2\sqrt{\frac{\mu}{L}\left(1-\frac{\mu}{L}\right)}
	\in
	(0,1] \, .
\end{equation}
Unlike $\rho_{\text{opt}}$ in \eqref{eq:jour-14-fxr:opt_rho_minimize_C}, this choice of $\rho$ does not depend on the initialization. It depends only on the geometry of the Hessian $\mb{A}'\mb{A}$. Furthermore, notice that both $\rho_{\text{opt}}$ and $\rho^{\star}$ fall in the interval $(0,1]$. Hence, although the linearized AL method converges for any $\rho>0$, we consider only $\rho\leq 1$ in our downward continuation approach.

We can now interpret the classic (upward) continuation approach based on the second-order recursive system analysis. The classic continuation approach usually starts from a small $\rho$ for better-conditioned inner minimization problem. Therefore, initially, the system is under-damped. Although the under-damped system has a slower asymptotic convergence rate, the oscillation can provide dramatic acceleration before the first zero-crossing of the oscillating components. We can think of the classic continuation approach as a greedy strategy that exploits the initial fast convergence rate of the under-damped system and carefully increases $\rho$ to avoid oscillation and move toward the critical damping regime. However, this greedy strategy requires a ``clever'' update rule for increasing $\rho$. If $\rho$ increases too fast, the acceleration ends prematurally; if $\rho$ increases too slow, the system starts oscillating.

In contrast, we consider a more conservative strategy that starts from the over-damped regime, say, $\rho=1$ as suggested in \eqref{eq:jour-14-fxr:opt_rho_qp}, and gradually reduces $\rho$ to the optimal AL penalty parameter $\rho^{\star}$. It sounds impractical at first because we do not know $\mu$ beforehand. To solve this problem, we adopt the adaptive restart proposed in \cite{odonoghue:13:arf} and generate a decreasing sequence $\rho_k$ that starts from $\rho=1$ and reaches $\rho^{\star}$ every time the algorithm restarts! As mentioned before, the system oscillates at frequency $\psi_1/(2\pi)$ when it is under-damped. This oscillating behavior can also be observed from the trajectory of updates. For example,
\begin{equation} \label{eq:jour-14-fxr:period_indicator_qp}
	\fx{\xi}{k}
	\teq
	\big(\iter{\mb{g}}{k}-\nabla\ell\big(\iter{\mb{x}}{k+1}\big)\big)'
	\big(\nabla\ell\big(\iter{\mb{x}}{k+1}\big)-\nabla\ell\big(\iter{\mb{x}}{k}\big)\big)
\end{equation}
oscillates at the frequency $\psi_1/\pi$ \cite{odonoghue:13:arf}. Hence, if we restart every time $\fx{\xi}{k}>0$, we restart the decreasing sequence about every $\left(\pi/2\right)\sqrt{L/\mu}$ iterations. Suppose we restart at the $r$th iteration, we have the approximation $\sqrt{\mu/L}\approx\pi/\left(2r\right)$, and the ideal AL penalty parameter at the $r$th iteration should be
\begin{equation} \label{eq:jour-14-fxr:opt_rho_r_qp}
	2\sqrt{\big(\ts\frac{\pi}{2r}\big)^{\sscs2}\big(1-\big(\ts\frac{\pi}{2r}\big)^{\sscs2}\big)}
	=
	\ts\frac{\pi}{r}\sqrt{1-\big(\ts\frac{\pi}{2r}\big)^{\sscs2}} \, .
\end{equation}
Finally, the proposed downward continuation approach has the form \eqref{eq:jour-14-fxr:lalm_iterates}, while we replace every $\rho$ in \eqref{eq:jour-14-fxr:lalm_iterates} with
\begin{equation} \label{eq:jour-14-fxr:rho_l_formula_qp}
	\rho_l
	=
	\begin{cases}
	1 &\!\!\!\text{, if $l=0$} \\
	\text{max}\!\left\{\frac{\pi}{l+1}\sqrt{1-\big(\frac{\pi}{2l+2}\big)^{\sscs2}},\rho_{\text{min}}\right\} &\!\!\!\text{, otherwise} \, ,
	\end{cases}
\end{equation}
where $l$ is a counter that starts from zero, increases by one, and is reset to zero whenever $\fx{\xi}{k}>0$. For the $M$-subset version \eqref{eq:jour-14-fxr:os_lalm_iterates}, we simply replace the gradients with the gradient approximations in \eqref{eq:jour-14-fxr:period_indicator_qp} and check the restart condition every inner iteration. The lower bound $\rho_{\text{min}}$ is a small positive number for guaranteeing convergence. Note that ADMM is convergent if $\rho$ is non-increasing and bounded below away from zero \cite[Corollary 4.2]{kontogiorgis:98:avp}. As shown in Section~\ref{subsec:jour-14-fxr:conv_pro}, the linearized AL method is in fact a convergent ADMM. Therefore, we can ensure convergence (of the one-subset version) of the proposed downward continuation approach if we set a non-zero lower bound for $\rho_l$, e.g., $\rho_{\text{min}}=10^{-3}$ in our experiments.

Note that $\rho_l$ in \eqref{eq:jour-14-fxr:rho_l_formula_qp} is the same for any $\mb{A}$. The adaptive restart condition takes care of the dependence on $\mb{A}$. That is why we call this approach the deterministic downward continuation approach. When $h$ is non-zero and/or $\mb{A}'\mb{A}$ is not positive definite, our analysis above does not hold. However, the deterministic downward continuation approach works well in practice for CT. One possible explanation is that the cost function can usually be well approximated by a quadratic near the optimum when the minimization problem is well-posed and $h$ is locally quadratic.

\section{Implementation details} \label{sec:jour-14-fxr:impl_detail}
In this section, we consider solving the X-ray CT image reconstruction problem:
\begin{equation} \label{eq:jour-14-fxr:ct_recon}
	\hat{\mb{x}}
	\in
	\argmin{\mb{x}\in\Omega}{\ts\frac{1}{2}\norm{\mb{y}-\mb{Ax}}{\mb{W}}^2+\fx{\R}{\mb{x}}}
\end{equation}
using the proposed algorithm, where $\mb{A}$ is the system matrix of a CT scan, $\mb{y}$ is the noisy sinogram, $\mb{W}$ is the statistical weighting matrix, $\R$ is an edge-preserving regularizer, and $\Omega$ denotes the convex set for a box constraint (usually the non-negativity constraint) on $\mb{x}$.
\subsection{\mbox{OS-LALM} for X-ray CT image reconstruction} \label{subsec:jour-14-fxr:os_lalm_ct}
The X-ray CT image reconstruction problem \eqref{eq:jour-14-fxr:ct_recon} is a constrained regularized weighted least-squares problem. To solve it using the proposed algorithm \eqref{eq:jour-14-fxr:lalm_iterates} and its OS variant \eqref{eq:jour-14-fxr:os_lalm_iterates}, we use the following substitution:
\begin{equation} \label{eq:jour-14-fxr:ct_substitution}
	\begin{cases}
	\mb{A}\leftarrow\mb{W}^{1/2}\mb{A} \\
	\mb{y}\leftarrow\mb{W}^{1/2}\mb{y} \\
	h\leftarrow\R+\iota_{\Omega} \, ,
	\end{cases}
\end{equation}
where $\iota_{\mathcal{C}}$ denotes the characteristic function of a convex set $\mathcal{C}$. Thus, the inner minimization problem in \eqref{eq:jour-14-fxr:lalm_iterates} and its OS variant \eqref{eq:jour-14-fxr:os_lalm_iterates} becomes a constrained denoising problem. In our implementation, we solve this inner constrained denoising problem using $n$ iterations of the fast iterative shrinkage/thresholding algorithm (FISTA) \cite{beck:09:afi} starting from the previous update as a warm start. As discussed in Section~\ref{subsec:jour-14-fxr:conv_pro}, inexact updates can slow down the convergence rate of the proposed algorithm. In general, the more FISTA iterations, the faster convergence rate of the proposed algorithm. However, the overhead of iterative inner updates is non-negligible for large $n$, especially when the number of subsets is large. Fortunately, in typical X-ray CT image reconstruction problems, the majorization is usually very loose (probably due to the huge dynamic range of the statistical weighting $\mb{W}$). Therefore, $t\ll1$ in most cases, greatly diminishing the regularization force in the constrained denoising problem. In practice, the constrained denoising problem can be solved up to some acceptable tolerance within just one or two iterations! Finally, for a fair comparison with other OS-based algorithms, in our experiments, we majorize the quadratic penalty in the scaled augmented Lagrangian using the SQS function with Hessian $\mb{L}_{\text{diag}}\teq\diag{\mb{A}'\mb{WA1}}$ \cite{erdogan:99:osa} and incrementally update the image using the subset gradients with the bit-reversal order \cite{herman:93:art} that heuristically minimizes the subset gradient variance as in other OS-based algorithms \cite{erdogan:99:osa,kim:13:aos,kim:13:axr,kim:13:osa}.

By the way, as mentioned before, the SQS function with Hessian $\mb{L}_{\text{diag}}$ is a very loose majorizer. To achieve the fastest convergence, one might want to use the tightest majorizer with Hessian $\mb{A}'\mb{WA}$. However, this would revert to the standard AL method \eqref{eq:jour-14-fxr:general_al_iterates} with expensive $\mb{x}$-updates. An alternative is the Barzilai-Borwein (spectral) method \cite{barzilai:88:tps} that mimics the Hessian $\mb{A}'\mb{WA}$ by $\mb{H}_k\teq\alpha_k\mb{L}_{\text{diag}}$, where the scaling factor $\alpha_k$ is solved by fitting the secant equation in the (weighted) least-squares sense. Detailed derivation and additional experimental results can be found in the supplementary material.
\subsection{Number of subsets} \label{subsec:jour-14-fxr:num_subsets}
As mentioned in Section~\ref{subsec:jour-14-fxr:os_lalm}, the number of subsets $M$ can affect the stability of OS-based algorithms. When $M$ is too large, OS algorithms typically become unstable, and one can observe artifacts in the reconstructed image. Therefore, finding an appropriate number of subsets is very important. Since errors of OS-based algorithms come from the gradient approximation using subset gradients, artifacts might be supressed using a better gradient approximation. Intuitively, to have an acceptable gradient approximation, each voxel in a subset should be sampled by a minimum number of views $s$. For simplicity, we consider the central voxel in the transaxial plane. In axial CT, the views are uniformly distributed in each subset, so we want
\begin{equation} \label{eq:jour-14-fxr:M_constraint_axial}
	\ts\frac{1}{M_\text{axial}}\cdot(\text{number of views})\geq s_{\text{axial}} \, .
\end{equation}
This leads to our maximum number of subsets for axial CT:
\begin{equation} \label{eq:jour-14-fxr:max_M_axial}
	M_{\text{axial}}\leq(\text{number of views})\cdot\ts\frac{1}{s_{\text{axial}}} \, .
\end{equation}
In helical CT, the situation is more complicated. Since the X-ray source moves in the z direction, a central voxel is only covered by $d_{\text{so}}/\left(p\cdot d_{\text{sd}}\right)$ turns, where $p$ is the pitch, $d_{\text{so}}$ denotes the distance from the X-ray source to the isocenter, and $d_{\text{sd}}$ denotes the distance from the X-ray source to the detector. Therefore, we want
\begin{equation} \label{eq:jour-14-fxr:M_constraint_helical}
	\ts\frac{1}{M_\text{helical}} \cdot (\text{number of views per turn}) \cdot \ts\frac{d_{\text{so}}}{p\cdot d_{\text{sd}}}
	\geq
	s_{\text{helical}} \, .
\end{equation}
This leads to our maximum number of subsets for helical CT:
\begin{equation} \label{eq:jour-14-fxr:max_M_helical}
	M_{\text{helical}}\leq(\text{number of views per turn})\cdot\ts\frac{d_{\text{so}}}{p\cdot s_{\text{helical}}\cdot d_{\text{sd}}} \, .
\end{equation}
Note that the maximum number of subsets for helical CT $M_{\text{helical}}$ is inversely proportional to the pitch $p$. That is, the maximum number of subsets for helical CT decreases for a larger pitch. We set $s_{\text{axial}}\approx 40$ and $s_{\text{helical}}\approx 24$ for the proposed algorithm in our experiments.

\section{Experimental results} \label{sec:jour-14-fxr:result}
This section reports numerical results for $3$D X-ray CT image reconstruction from real CT scans with different geometries using various OS-based algorithms, including
\begin{itemize}
	\item \mbox{\textbf{OS-SQS-$\mbi{M}$}}: the standard OS algorithm \cite{erdogan:99:osa} with $M$ subsets,
	\item \mbox{\textbf{OS-Nes05-$\mbi{M}$}}: the OS+momentum algorithm \cite{kim:13:axr} based on Nesterov's fast gradient method \cite{nesterov:05:smo} with $M$ subsets,
	\item \mbox{\textbf{OS-rNes05-$\mbi{M}$-$\msb{\gamma}$}}: the relaxed OS+momentum algorithm \cite{kim:13:osa} based on Nesterov's fast gradient method \cite{nesterov:05:smo} and Devolder's growing diagonal majorizer that uses an iteration-dependent diagonal Hessian $\mb{D}+\left(j+2\right)\gamma\,\mb{I}$ \cite{devolder:11:sfo} at the $j$th inner iteration with $M$ subsets,
	\item \mbox{\textbf{OS-LALM-$\mbi{M}$-$\msb{\rho}$-$\mbi{n}$}}: the proposed algorithm using a fixed AL penalty parameter $\rho$ with $M$ subsets and $n$ FISTA iterations for solving the inner constrained denoising problem, and
	\item \mbox{\textbf{OS-LALM-$\mbi{M}$-c-$\mbi{n}$}}: the proposed algorithm using the deterministic downward continuation approach described in Section~\ref{subsec:jour-14-fxr:down_cont} with $M$ subsets and $n$ FISTA iterations for solving the inner constrained denoising problem.
\end{itemize}
\mbox{OS-SQS} is a standard iterative method for tomographic reconstruction. \mbox{OS-Nes05} is a state-of-the-art method for fast X-ray CT image reconstruction using Nesterov's momentum technique, and \mbox{OS-rNes05} is the relaxed variant of \mbox{OS-Nes05} for suppressing OS artifacts using a growing diagonal majorizer. Unlike other OS-based algorithms, our proposed algorithm has additional overhead due to the iterative inner updates. However, when $n=1$, i.e., with a single gradient descent for the constrained denoising problem, all algorithms listed above have the same computational complexity (one forward/back-projection pair and $M$ regularizer gradient evaluations per iteration). Therefore, comparing the convergence rate as a function of iteration is fair. We measured the convergence rate using the RMS difference (in the region of interest) between the reconstructed image $\iter{\mb{x}}{k}$ and the almost converged reference reconstruction $\mb{x}^{\star}$ that is generated by running several iterations of the standard OS+momentum algorithm with a small $M$, followed by $2000$ iterations of a convergent (i.e., one-subset) FISTA with adaptive restart \cite{odonoghue:13:arf}.
\subsection{Shoulder scan} \label{subsec:jour-14-fxr:shoulder}
In this experiment, we reconstructed a $512\times 512\times 109$ image from a shoulder region helical CT scan, where the sinogram has size $888\times 32\times 7146$ and pitch $0.5$. The maximum number of subsets suggested by \eqref{eq:jour-14-fxr:max_M_helical} is about $40$. Figure~\ref{fig:jour-14-fxr:shoulder_ini_ref_pro} shows the cropped images from the central transaxial plane of the initial FBP image, the reference reconstruction, and the reconstructed image using the proposed algorithm (\mbox{OS-LALM-$40$-c-$1$}) at the $30$th iteration (i.e., after $30$ forward/back-projection pairs). As can be seen in Figure~\ref{fig:jour-14-fxr:shoulder_ini_ref_pro}, the reconstructed image using the proposed algorithm looks almost the same as the reference reconstruction in the display window from $800$ to $1200$ Hounsfield unit (HU). The reconstructed image using the standard OS+momentum algorithm (not shown here) also looks quite similar to the reference reconstruction.

To see the difference between the standard OS+momentum algorithm and our proposed algorithm, Figure~\ref{fig:jour-14-fxr:shoulder_diff_image} shows the difference images, i.e., $\iter{\mb{x}}{30}-\mb{x}^{\star}$, for different OS-based algorithms. We can see that the standard OS algorithm (with both $20$ and $40$ subsets) exhibits visible streak artifacts and structured high frequency noise in the difference image. When $M=20$, the difference images look similar for the standard OS+momentum algorithm and our proposed algorithm, although that of the standard OS+momentum algorithm is slightly structured and non-uniform. When $M=40$, the difference image for our proposed algorithm remains uniform, whereas some noise-like OS artifacts appear in the standard OS+momentum algorithm's difference image. The OS artifacts in the reconstructed image using the standard OS+momentum algorithm become worse when $M$ increases, e.g., $M=80$. This shows the better gradient error tolerance of our proposed algorithm when OS is used, probably due to the way we compute the search direction. Additional experimental results (of a truncated abdomen scan) that demonstrate how different OS-based algorithms behave when the number of subsets exceeds the suggested maximum number of subsets can be found in the supplementary material.

In \eqref{eq:jour-14-fxr:lalm_iterates}, the search direction $\mb{s}$ is a linear average of the current gradient and the split gradient of $\ell$. Specifically, \eqref{eq:jour-14-fxr:lalm_iterates} computes the search direction using a low-pass infinite-impulse-response (IIR) filter (across iterations), and therefore, the gradient error might be suppressed by the low-pass filter, leading to a more stable reconstruction. A similar averaging technique (with a low-pass finite-impulse-response or FIR filter) is also used in the stochastic average gradient (SAG) method \cite{leroux:13:asg,schmidt:13:mfs} for acceleration and stabilization. In comparison, the standard OS+momentum algorithm computes the search direction using only the current gradient (of the auxiliary image), so the gradient error accumulates when OS is used, providing a less stable reconstruction.

Figure~\ref{fig:jour-14-fxr:shoulder_rmsd} shows the convergence rate curves (RMS differences between the reconstructed image $\iter{\mb{x}}{k}$ and the reference reconstruction $\mb{x}^{\star}$ as a function of iteration) using OS-based algorithms with (a) $20$ subsets and (b) $40$ subsets, respectively. By exploiting the linearized AL method, the proposed algorithm accelerates the standard OS algorithm remarkably. As mentioned in Section~\ref{subsec:jour-14-fxr:os_lalm}, a smaller $\rho$ can provide greater acceleration due to the increased step size. We can see the approximate $5$, $10$, and $20$ times acceleration (comparing to the standard OS algorithm, i.e., $\rho=1$) using $\rho=0.2$, $0.1$, and $0.05$ in both figures. Note that too large step sizes can cause overshoots in early iterations. For example, the proposed algorithm with $\rho=0.05$ shows slower convergence rate in first few iterations but decreases more rapidly later. This trade-off can be overcome by using our proposed deterministic downward continuation approach. As can be seen in Figure~\ref{fig:jour-14-fxr:shoulder_rmsd}, the proposed algorithm using deterministic downward continuation reaches the lowest RMS differences (lower than $1$ HU) within only $30$ iterations! Furthermore, the slightly higher RMS difference of the standard OS+momentum algorithm with $40$ subsets shows evidence of OS artifacts.

Figure~\ref{fig:jour-14-fxr:shoulder_rmsd_n} demonstrates the effectiveness of solving the inner constrained denoising problem using FISTA (for X-ray CT image reconstruction) mentioned in Section~\ref{subsec:jour-14-fxr:os_lalm_ct}. As can be seen in Figure~\ref{fig:jour-14-fxr:shoulder_rmsd_n}, the convergence rate improves only slightly when more FISTA iterations are used for solving the inner constrained denoising problem. In practice, one FISTA iteration, i.e., $n=1$, suffices for fast and ``convergent'' X-ray CT image reconstruction. When the inner constrained denoising problem is more difficult to solve, one might want to introduce an additional split variable for the regularizer as in \cite{ramani:12:asb} at the cost of higher memory burden, thus leading to a ``high-memory'' version of OS-LALM.

\begin{figure*}
	\centering
	\includegraphics[width=\textwidth]{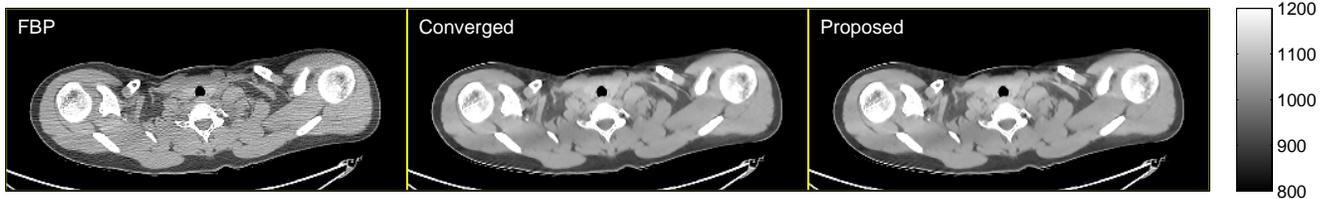}
	\caption{Shoulder scan: cropped images (displayed from $800$ to $1200$ HU) from the central transaxial plane of the initial FBP image $\iter{\mb{x}}{0}$ (left), the reference reconstruction $\mb{x}^{\star}$ (center), and the reconstructed image using the proposed algorithm (\mbox{OS-LALM-$40$-c-$1$}) at the $30$th iteration $\iter{\mb{x}}{30}$ (right).}
	\label{fig:jour-14-fxr:shoulder_ini_ref_pro}
\end{figure*}

\begin{figure*}
	\centering
	\includegraphics[width=\textwidth]{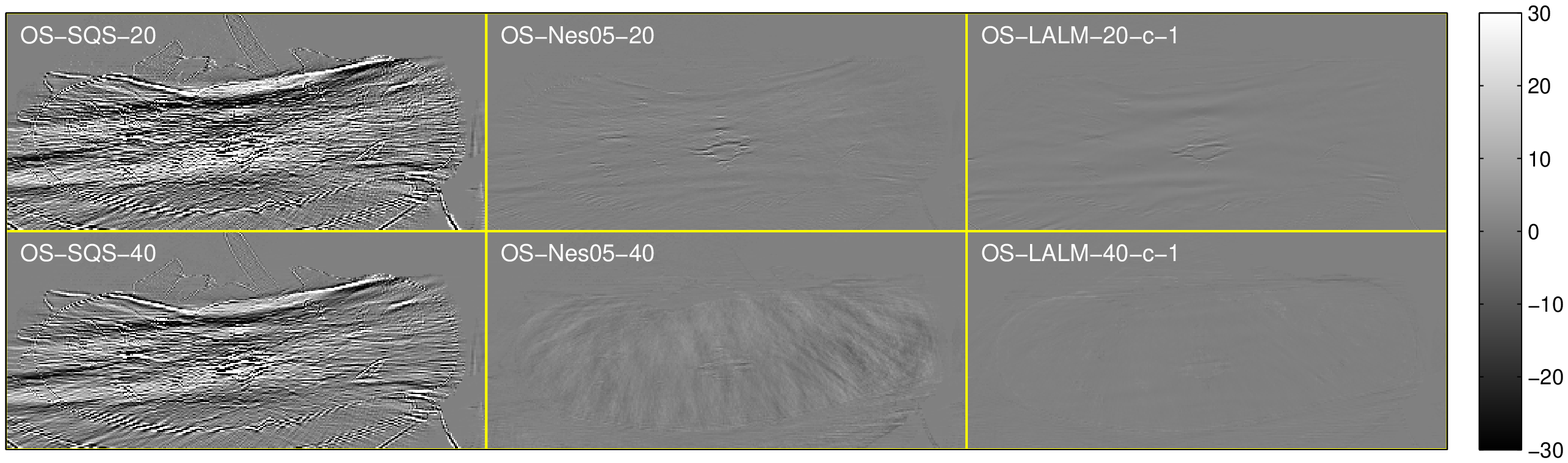}
	\caption{Shoulder scan: cropped difference images (displayed from ${-30}$ to $30$ HU) from the central transaxial plane of $\iter{\mb{x}}{30}-\mb{x}^{\star}$ using OS-based algorithms.}
	\label{fig:jour-14-fxr:shoulder_diff_image}
\end{figure*}

\begin{figure*}
	\centering
	\subfigure[]{
	\includegraphics[width=0.45\textwidth]{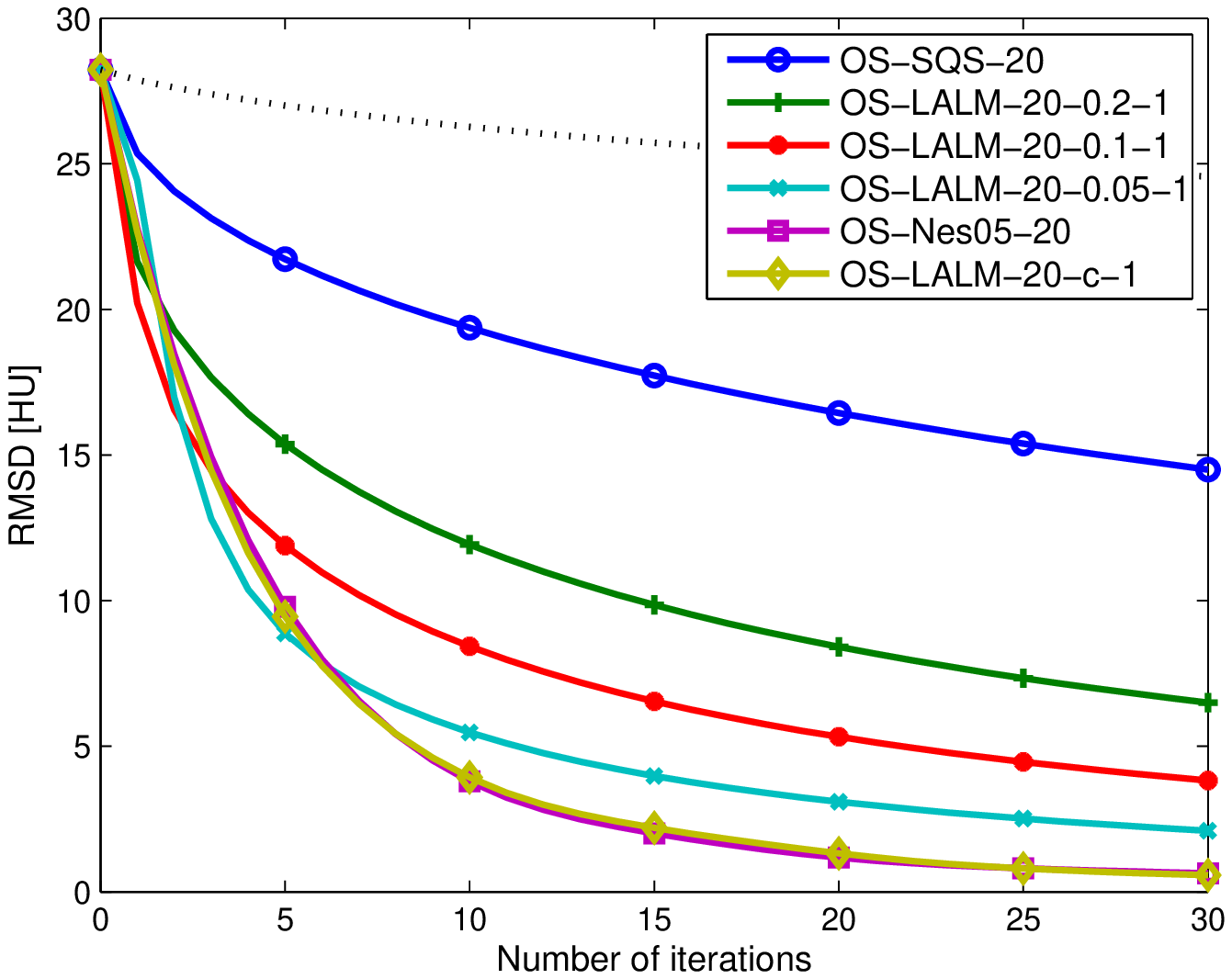}
	\label{fig:jour-14-fxr:shoulder_rmsd_20}
	}
	\subfigure[]{
	\includegraphics[width=0.45\textwidth]{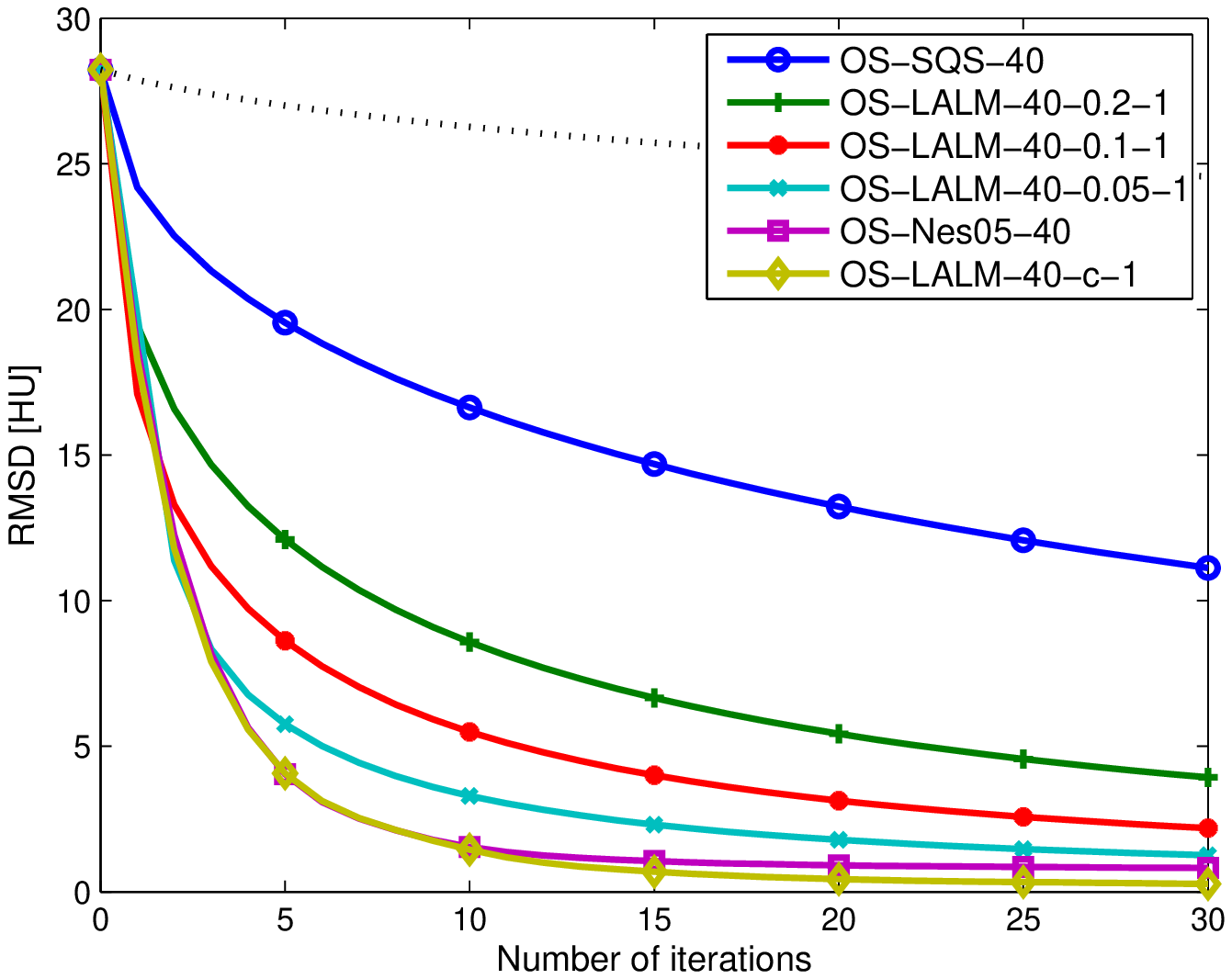}
	\label{fig:jour-14-fxr:shoulder_rmsd_40}
	}
	\caption{Shoulder scan: RMS differences between the reconstructed image $\iter{\mb{x}}{k}$ and the reference reconstruction $\mb{x}^{\star}$ as a function of iteration using OS-based algorithms with (a) $20$ subsets and (b) $40$ subsets, respectively. The dotted lines show the RMS differences using the standard OS algorithm with one subset as the baseline convergence rate.}
	\label{fig:jour-14-fxr:shoulder_rmsd}
\end{figure*}

\begin{figure}
	\centering
	\includegraphics[width=0.45\textwidth]{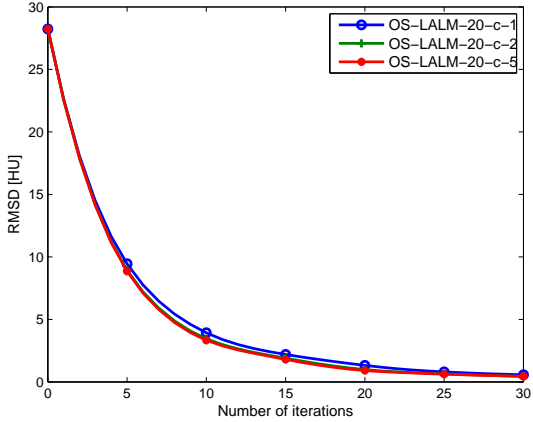}
	\caption{Shoulder scan: RMS differences between the reconstructed image $\iter{\mb{x}}{k}$ and the reference reconstruction $\mb{x}^{\star}$ as a function of iteration using the proposed algorithm with different number of FISTA iterations $n$ ($1$, $2$, and $5$) for solving the inner constrained denoising problem.}
	\label{fig:jour-14-fxr:shoulder_rmsd_n}
\end{figure}


\subsection{GE performance phantom} \label{subsec:jour-14-fxr:gepp}
In this experiment, we reconstructed a $1024\times 1024\times 90$ image from the GE performance phantom (GEPP) axial CT scan, where the sinogram has size $888\times 64\times 984$. The maximum number of subsets suggested by \eqref{eq:jour-14-fxr:max_M_axial} is about $24$. Figure~\ref{fig:jour-14-fxr:gepp_ini_ref_pro} shows the cropped images from the central transaxial plane of the initial FBP image, the reference reconstruction, and the reconstructed image using the proposed algorithm (\mbox{OS-LALM-$24$-c-$1$}) at the $30$th iteration. Again, the reconstructed image using the proposed algorithm at the $30$th iteration is very similar to the reference reconstruction.

The goal of this experiment is to evaluate the gradient error tolerance of our proposed algorithm and the recently proposed relaxed OS+momentum algorithm \cite{kim:13:osa} that trades reconstruction stability with speed by introducing relaxed momentum (i.e., a growing diagonal majorizer). We vary $\gamma$ to investigate different amounts of relaxation. When $\gamma=0$, the relaxed OS+momentum algorithm reverts to the standard OS+momentum algorithm. A larger $\gamma$ can lead to a more stable reconstruction but slower convergence. Figure~\ref{fig:jour-14-fxr:gepp_diff_image} and Figure~\ref{fig:jour-14-fxr:gepp_rmsd} show the difference images and convergence rate curves using these OS-based algorithms, respectively. As can be seen in Figure~\ref{fig:jour-14-fxr:gepp_diff_image} and \ref{fig:jour-14-fxr:gepp_rmsd}, the standard OS+momentum algorithm has even more OS artifacts than the standard OS algorithm, probably because $24$ subsets in axial CT is too aggressive for the standard OS+momentum algorithm, and we can see clear OS artifacts in the difference image and large limit cycle in the convergence rate curve. The OS artifacts are less visible as $\gamma$ increases. The case $\gamma=0.005$ achieves the best trade-off between OS artifact removal and fast convergence rate. When $\gamma$ is even larger, the relaxed OS+momentum algorithm is significantly slowed down although the difference image looks quite uniform (with some structured high frequency noise). The proposed \mbox{OS-LALM} algorithm avoids the need for such parameter tuning; one only needs to choose the number of subsets $M$. Furthermore, even for $\gamma=0.005$, the relaxed OS+momentum algorithm still has more visible OS artifacts and slower convergence rate comparing to our proposed algorithm.

\begin{figure*}
	\centering
	\includegraphics[width=\textwidth]{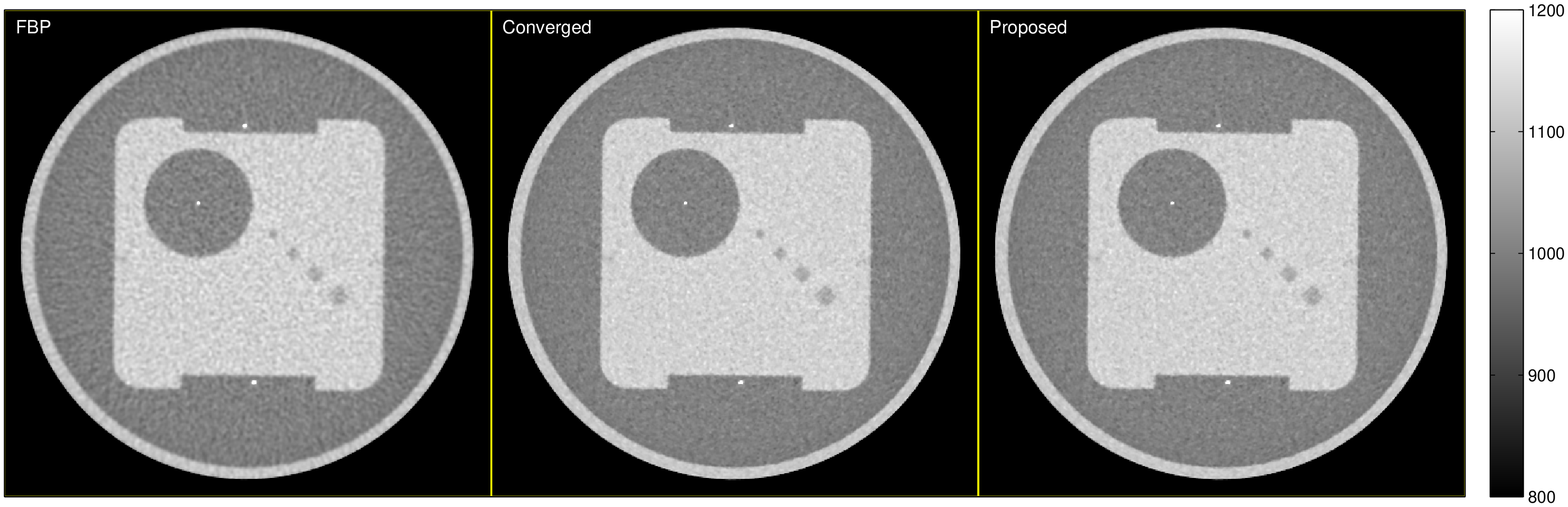}
	\caption{GE performance phantom: cropped images (displayed from $800$ to $1200$ HU) from the central transaxial plane of the initial FBP image $\iter{\mb{x}}{0}$ (left), the reference reconstruction $\mb{x}^{\star}$ (center), and the reconstructed image using the proposed algorithm (\mbox{OS-LALM-$24$-c-$1$}) at the $30$th iteration $\iter{\mb{x}}{30}$ (right).}
	\label{fig:jour-14-fxr:gepp_ini_ref_pro}
\end{figure*}

\begin{figure*}
	\centering
	\includegraphics[width=\textwidth]{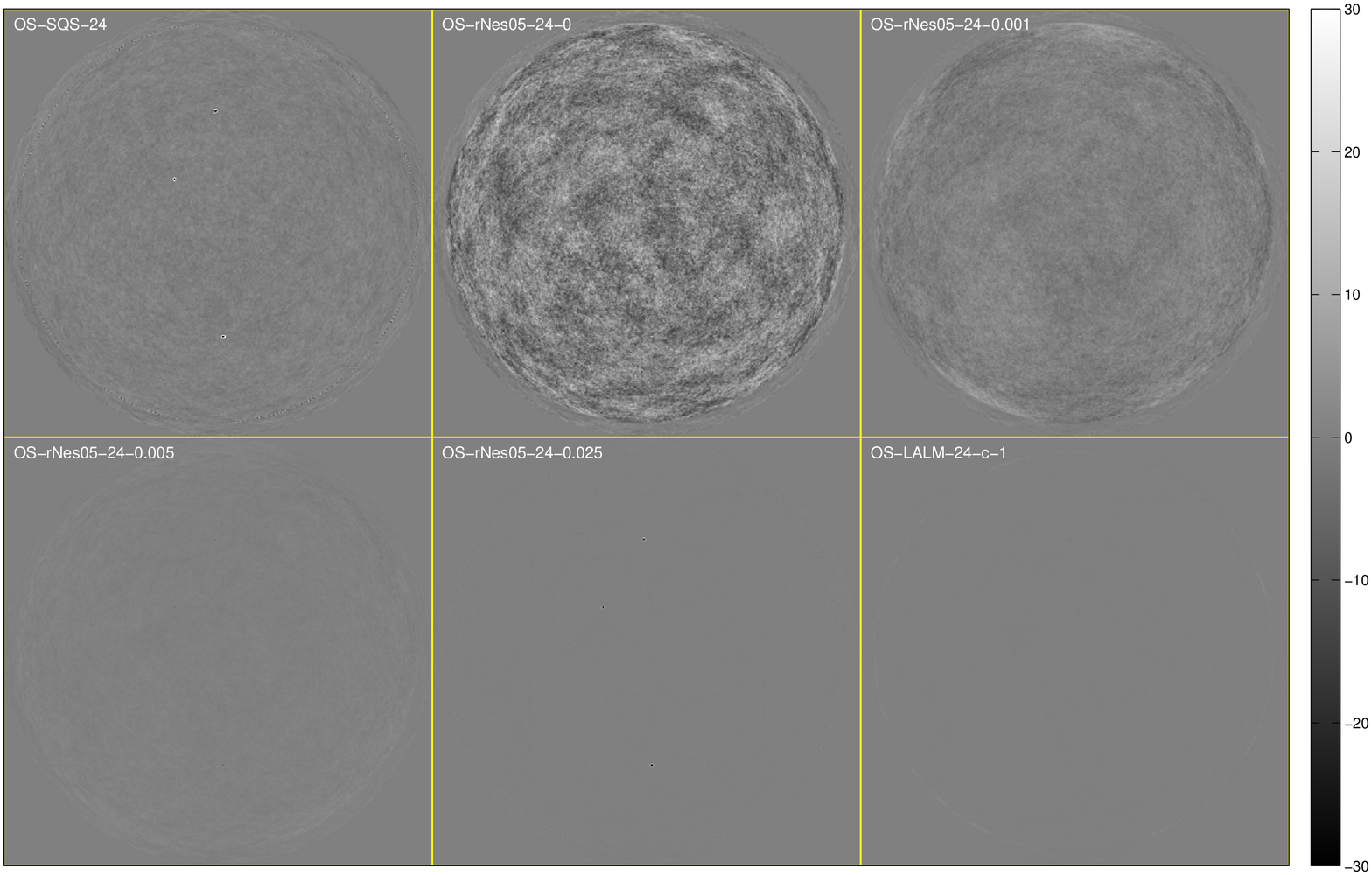}
	\caption{GE performance phantom: cropped difference images (displayed from ${-30}$ to $30$ HU) from the central transaxial plane of $\iter{\mb{x}}{30}-\mb{x}^{\star}$ using the relaxed OS+momentum algorithm \cite{kim:13:osa} and the proposed algorithm.}
	\label{fig:jour-14-fxr:gepp_diff_image}
\end{figure*}

\begin{figure}
	\centering
	\includegraphics[width=0.45\textwidth]{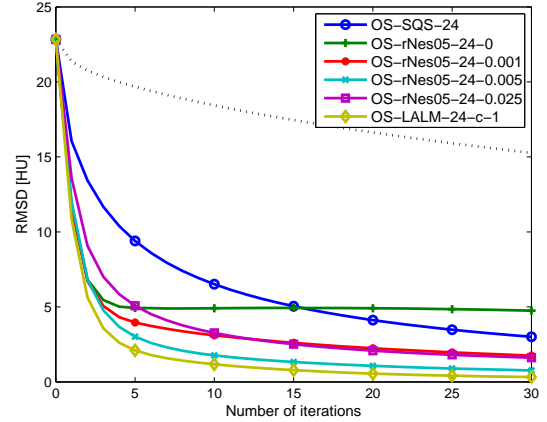}
	\caption{GE performance phantom: RMS differences between the reconstructed image $\iter{\mb{x}}{k}$ and the reference reconstruction $\mb{x}^{\star}$ as a function of iteration using the relaxed OS+momentum algorithm \cite{kim:13:osa} and the proposed algorithm with $24$ subsets. The dotted line shows the RMS differences using the standard OS algorithm with one subset as the baseline convergence rate.}
	\label{fig:jour-14-fxr:gepp_rmsd}
\end{figure}

\section{Conclusion} \label{sec:jour-14-fxr:conclusion}
The augmented Lagrangian (AL) method and ordered subsets (OS) are two powerful techniques for accelerating optimization algorithms using decomposition and approximation, respectively. This paper combined these two techniques by considering a linearized variant of the AL method and proposed a fast OS-accelerable splitting-based algorithm, \mbox{OS-LALM}, for solving regularized (weighted) least-squares problems, together with a novel deterministic downward continuation approach based on a second-order damping system. We applied our proposed algorithm to X-ray computed tomography (CT) image reconstruction problems and compared with some state-of-the-art methods using real CT scans with different geometries. Experimental results showed that our proposed algorithm exhibits fast convergence rate and excellent gradient error tolerance when OS is used. As future works, we are interested in the convergence rate analysis of the proposed algorithm with the deterministic downward continuation approach and a more rigorous convergence analysis of the proposed algorithm for $M>1$.

\section*{Acknowledgements}
The authors thank GE Healthcare for providing sinogram data in our experiments.

\bibliographystyle{ieeetr}
\bibliography{master}

\end{document}


\title{Fast X-Ray CT Image Reconstruction Using the Linearized Augmented Lagrangian Method with Ordered Subsets: Supplementary Material}
\author{Hung Nien, \textit{Student Member, IEEE}, and Jeffrey A. Fessler, \textit{Fellow, IEEE}\\[5pt]Department of Electrical Engineering and Computer Science\\University of Michigan, Ann Arbor, MI}

\maketitle

In this supplementary material, we provide the detailed convergence analysis of the linearized augmented Lagrangian (AL) method with inexact updates proposed in \cite{nien:14:fxr} together with additional experimental results.

\section{Convergence analysis of the inexact linearized AL method} \label{sec:jour-14-fxr-supp:conv_ana_inexact_lalm}
Consider a general composite convex optimization problem:
\begin{equation} \label{eq:jour-14-fxr-supp:comp_conv_opt_prob}
	\hat{\mb{x}}
	\in
	\bargmin{\mb{x}}{\fx{g}{\mb{Ax}}+\fx{h}{\mb{x}}}
\end{equation}
and its equivalent constrained minimization problem:
\begin{equation} \label{eq:jour-14-fxr-supp:eq_comp_conv_opt_prob}
	\left(\hat{\mb{x}},\hat{\mb{u}}\right)
	\in
	\bargmin{\mb{x},\mb{u}}{\fx{g}{\mb{u}}+\fx{h}{\mb{x}}}
	\text{ s.t. }
	\mb{u}=\mb{Ax} \, ,
\end{equation}
where both $g$ and $h$ are closed and proper convex functions. The inexact linearized AL methods that solve \eqref{eq:jour-14-fxr-supp:eq_comp_conv_opt_prob} are as follows:
\begin{equation} \label{eq:jour-14-fxr-supp:inexact_lalm_type_1}
	\begin{cases}
	\Bnorm{
	\iter{\mb{x}}{k+1}
	-
	\nbargmin
	{\mb{x}}
	{\fx{\phi_k}{\mb{x}}}
	}{}
	\leq
	\delta_k \\
	\iter{\mb{u}}{k+1}
	\in
	\argmin
	{\mb{u}}
	{\fx{g}{\mb{u}}+\ts\frac{\rho}{2}\norm{\mb{A}\iter{\mb{x}}{k+1}-\mb{u}-\iter{\mb{d}}{k}}{2}^2} \\
	\iter{\mb{d}}{k+1}
	=
	\iter{\mb{d}}{k}-\mb{A}\iter{\mb{x}}{k+1}+\iter{\mb{u}}{k+1} \, ,
	\end{cases}
\end{equation}
and
\begin{equation} \label{eq:jour-14-fxr-supp:inexact_lalm_type_2}
	\begin{cases}
	\Big|
	\bfx{\phi_k}{\iter{\mb{x}}{k+1}}
	-
	\nbvarmin
	{\mb{x}}
	{\fx{\phi_k}{\mb{x}}}
	\Big|
	\leq
	\varepsilon_k \\
	\iter{\mb{u}}{k+1}
	\in
	\argmin
	{\mb{u}}
	{\fx{g}{\mb{u}}+\ts\frac{\rho}{2}\norm{\mb{A}\iter{\mb{x}}{k+1}-\mb{u}-\iter{\mb{d}}{k}}{2}^2} \\
	\iter{\mb{d}}{k+1}
	=
	\iter{\mb{d}}{k}-\mb{A}\iter{\mb{x}}{k+1}+\iter{\mb{u}}{k+1} \, ,
	\end{cases}
\end{equation}
where
\begin{equation} \label{eq:jour-14-fxr-supp:def_phi_k}
	\fx{\phi_k}{\mb{x}}
	\teq
	\fx{h}{\mb{x}}+\bfx{\breve{\theta}_k}{\mb{x};\iter{\mb{x}}{k}} \, ,
\end{equation}
and
\begin{equation} \label{eq:jour-14-fxr-supp:sqs_quad_al_penalty}
	\bfx{\breve{\theta}_k}{\mb{x};\iter{\mb{x}}{k}}
	\teq
	\bfx{\theta_k}{\iter{\mb{x}}{k}}
	+
	\biprod{\nabla\bfx{\theta_k}{\iter{\mb{x}}{k}}}{\mb{x}-\iter{\mb{x}}{k}}
	+
	\ts\frac{\rho L}{2}\norm{\mb{x}-\iter{\mb{x}}{k}}{2}^2
\end{equation}
is the separable quadratic surrogate (SQS) function of
\begin{equation} \label{eq:jour-14-fxr-supp:quad_al_penalty}
	\fx{\theta_k}{\mb{x}}\teq\ts\frac{\rho}{2}\norm{\mb{Ax}-\iter{\mb{u}}{k}-\iter{\mb{d}}{k}}{2}^2
\end{equation}
with $L>\norm{\mb{A}}{2}^2=\fx{\lambda_{\text{max}}}{\mb{A}'\mb{A}}$, $\left\{\delta_k\right\}_{k=0}^{\infty}$ and $\left\{\varepsilon_k\right\}_{k=0}^{\infty}$ are two non-negative sequences, $\mb{d}$ is the scaled Lagrange multiplier of the split variable $\mb{u}$, and $\rho>0$ is the corresponding AL penalty parameter. Furthermore, in \cite{nien:14:fxr}, we also showed that the inexact linearized AL method is equivalent to the invexact version of the Chambolle-Pock first-order primal-dual algorithm (CPPDA) \cite{chambolle:11:afo}:
\begin{equation} \label{eq:jour-14-fxr-supp:general_cppd_iterates}
	\begin{cases}
	\iter{\mb{x}}{k+1}
	\in
	\prox{\sigma h}{\iter{\mb{x}}{k}-\sigma\mb{A}'\iter{\bar{\mb{z}}}{k}} \\
	\iter{\mb{z}}{k+1}
	\in
	\prox{\tau g^*}{\iter{\mb{z}}{k}+\tau\mb{A}\iter{\mb{x}}{k+1}} \\
	\iter{\bar{\mb{z}}}{k+1}
	=
	\iter{\mb{z}}{k+1}+\left(\iter{\mb{z}}{k+1}-\iter{\mb{z}}{k}\right)
	\end{cases}
\end{equation}
that solves the minimax problem:
\begin{equation} \label{eq:jour-14-fxr-supp:eq_comp_conv_opt_prob_minimax}
	\left(\hat{\mb{z}},\hat{\mb{x}}\right)
	\in
	\argminmax
	{\mb{z}}{\mb{x}}
	{\fx{\Omega}{\mb{z},\mb{x}}\teq\iprod{-\mb{A}'\mb{z}}{\mb{x}}+\fx{g^*}{\mb{z}}-\fx{h}{\mb{x}}}
\end{equation}
with $\mb{z}=-\tau\mb{d}$, $\sigma=\rho^{-1}t$, $\tau=\rho$, and $t\teq 1/L$, where $\text{prox}_f$ denotes the proximal mapping of $f$ defined as:
\begin{equation} \label{eq:jour-14-fxr-supp:def_prox}
	\prox{f}{\mb{z}}
	\teq
	\argmin{\mb{x}}{\fx{f}{\mb{x}}+\ts\frac{1}{2}\norm{\mb{x}-\mb{z}}{2}^2} \, ,
\end{equation}
and $f^*$ denotes the convex conjugate of a function $f$. Note that $g^{**}=g$ and $h^{**}=h$ since both $g$ and $h$ are closed, proper, and convex.

\subsection{Proof of Theorem 1} \label{subsec:jour-14-fxr-supp:proof_thm_1}
\begin{theorem} \label{thm:jour-14-fxr-supp:inexact_lalm_type_1}
Consider a constrained composite convex optimization problem \eqref{eq:jour-14-fxr-supp:eq_comp_conv_opt_prob} where both $g$ and $h$ are closed and proper convex functions. Let $\rho>0$ and $\left\{\delta_k\right\}_{k=0}^{\infty}$ denote a non-negative sequence such that
\begin{equation} \label{eq:jour-14-fxr-supp:error_bound_type_1}
	\sum_{k=0}^{\infty}\delta_k<\infty \, .
\end{equation}
If \eqref{eq:jour-14-fxr-supp:eq_comp_conv_opt_prob} has a solution $\left(\hat{\mb{x}},\hat{\mb{u}}\right)$, then the sequence of updates $\left\{\left(\iter{\mb{x}}{k},\iter{\mb{u}}{k}\right)\right\}_{k=0}^{\infty}$ generated by the inexact linearized AL method \eqref{eq:jour-14-fxr-supp:inexact_lalm_type_1} converges to $\left(\hat{\mb{x}},\hat{\mb{u}}\right)$; otherwise, at least one of the sequences $\left\{\left(\iter{\mb{x}}{k},\iter{\mb{u}}{k}\right)\right\}_{k=0}^{\infty}$ or $\left\{\iter{\mb{d}}{k}\right\}_{k=0}^{\infty}$ diverges.
\end{theorem}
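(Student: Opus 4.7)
The plan is to exploit the equivalence noted in the excerpt between the inexact linearized AL method \eqref{eq:jour-14-fxr-supp:inexact_lalm_type_1} and an inexact Chambolle--Pock primal-dual algorithm (CPPDA) \eqref{eq:jour-14-fxr-supp:general_cppd_iterates}, and then adapt the standard CPPDA convergence proof to absorb a summable argument-error perturbation on the primal proximal step. Because $\bfx{\breve{\theta}_k}{\mb{x};\iter{\mb{x}}{k}}$ is an SQS majorizer with curvature $\rho L$, the exact minimizer of $\fx{\phi_k}{\cdot}$ equals $\prox{\sigma h}{\iter{\mb{x}}{k}-\sigma\mb{A}'\iter{\bar{\mb{z}}}{k}}$ with $\sigma=1/(\rho L)=\rho^{-1}t$, so the hypothesis $\Bnorm{\iter{\mb{x}}{k+1}-\arg\min_{\mb{x}}\bfx{\phi_k}{\mb{x}}}{}\le\delta_k$ is exactly the statement that the primal CPPDA step is performed with an additive argument error of size at most $\delta_k$, while the $\mb{u}$- and $\mb{d}$-updates remain exact.

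Next I would derive a perturbed one-step descent inequality. Evaluating the exact CPPDA iterates at a saddle point $(\hat{\mb{z}},\hat{\mb{x}})$ of $\Omega$ and invoking the subgradient characterisations of the two proximal steps yields a bound on $D_k\teq\ts\frac{1}{2\sigma}\snorm{\iter{\mb{x}}{k}-\hat{\mb{x}}}{2}^2+\ts\frac{1}{2\tau}\snorm{\iter{\mb{z}}{k}-\hat{\mb{z}}}{2}^2$ after telescoping the extrapolated dual term; the condition $\sigma\tau\snorm{\mb{A}}{2}^2<1$, which here follows from $L>\snorm{\mb{A}}{2}^2$ via $\sigma\tau=t=1/L$, keeps the requisite cross term nonnegative. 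Substituting the inexact primal iterate introduces residuals controlled by $\delta_k\bnorm{\iter{\mb{x}}{k+1}-\hat{\mb{x}}}{}$ and $\delta_k^2$ through firm nonexpansiveness of the prox and Cauchy--Schwarz, producing a quasi-Fej\'er inequality of the form $D_{k+1}\le D_k+\alpha_k\sqrt{D_k}+\beta_k$ with $\sum_k\alpha_k<\infty$ and $\sum_k\beta_k<\infty$. A Polyak/Robbins--Siegmund type lemma then gives boundedness of $\{D_k\}$ and existence of $\lim_k D_k$ for every saddle point.

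Opial's lemma closes the convergent case: boundedness produces a cluster point of $\{(\iter{\mb{x}}{k},\iter{\mb{z}}{k})\}$, and passing to the limit in the inexact iterates (using closedness of $g$ and $h$, $1$-Lipschitzness of $\text{prox}_{\sigma h}$ and $\text{prox}_{\tau g^*}$, and $\delta_k\to 0$) shows that every cluster point is a saddle point of $\Omega$; uniqueness of the limit then follows from the standard two-subsequence argument. Undoing the change of variables $\mb{z}=-\tau\mb{d}$ together with $\iter{\mb{u}}{k+1}=\mb{A}\iter{\mb{x}}{k+1}-\iter{\mb{d}}{k}+\iter{\mb{d}}{k+1}$ transfers convergence back to $(\iter{\mb{x}}{k},\iter{\mb{u}}{k})\to(\hat{\mb{x}},\hat{\mb{u}})$. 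For the ``otherwise'' dichotomy, I would argue by contrapositive: if both $\{(\iter{\mb{x}}{k},\iter{\mb{u}}{k})\}$ and $\{\iter{\mb{d}}{k}\}$ were bounded, extracting a convergent subsequence and passing to the limit in the three updates (using $\delta_k\to 0$ and closedness of $\partial g$ and $\partial h$) would produce a KKT triple for \eqref{eq:jour-14-fxr-supp:eq_comp_conv_opt_prob}, contradicting the absence of a solution.

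I expect the main difficulty to lie in deriving the perturbed one-step inequality cleanly: the primal error propagates through $\mb{A}\iter{\mb{x}}{k+1}$ into $\iter{\bar{\mb{z}}}{k+1}$, so the $\delta_k$ terms must be carried through both the primal and the dual subgradient inequalities and then cancelled against the extrapolation in such a way that only summable cross-terms survive when the Polyak/Robbins--Siegmund lemma is invoked.
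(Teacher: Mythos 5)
Your route is genuinely different from the paper's. The paper never touches the primal--dual formulation for this theorem: it rewrites the SQS majorizer as $\bfx{\breve{\theta}_k}{\mb{x};\iter{\mb{x}}{k}}=\fx{\theta_k}{\mb{x}}+\ts\frac{\rho}{2}\norm{\mb{x}-\iter{\mb{x}}{k}}{\mb{G}}^2$ with $\mb{G}=L\mb{I}-\mb{A}'\mb{A}\succ 0$, recognizes that proximal term as the footprint of a second, redundant split $\mb{v}=\mb{G}^{1/2}\mb{x}$ whose multiplier stays at zero, and thereby identifies the linearized AL method as an \emph{exact} ADMM on the stacked constraint matrix $\mb{S}=[\mb{A};\mb{G}^{1/2}]$, which has full column rank. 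Convergence under summable argument errors in the $\mb{x}$-update \emph{and} the divergence dichotomy when no solution exists then both come verbatim from \cite[Theorem 8]{eckstein:92:otd}. Your CPPDA-plus-quasi-Fej\'er plan is the machinery the paper reserves for Theorem~2, and for the convergence half it can plausibly be completed; what it buys is an ergodic rate estimate, at the cost of redoing the error analysis for \emph{argument} errors $\delta_k$ rather than \emph{objective} errors $\varepsilon_k$ --- these are not interchangeable (an argument error is absorbed via nonexpansiveness of the prox, not via $\varepsilon$-subdifferentials, and $\sum_k\delta_k<\infty$ plays the role that $\sum_k\sqrt{\varepsilon_k}<\infty$ plays in the Schmidt et al.\ analysis). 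You correctly flag that propagating $\delta_k$ through $\mb{A}\iter{\mb{x}}{k+1}$, the dual prox, and the extrapolation is the delicate step; that work is real and is entirely avoided by the paper's two-split reduction.

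The genuine gap is in your contrapositive for the ``otherwise'' clause. From boundedness of $\{(\iter{\mb{x}}{k},\iter{\mb{u}}{k})\}$ and $\{\iter{\mb{d}}{k}\}$ you extract a convergent subsequence and ``pass to the limit in the three updates''; but to read off a KKT triple from the limit you also need the residuals $\iter{\mb{x}}{k+1}-\iter{\mb{x}}{k}$ and $\mb{A}\iter{\mb{x}}{k+1}-\iter{\mb{u}}{k+1}=\iter{\mb{d}}{k}-\iter{\mb{d}}{k+1}$ to vanish along that subsequence. Those residuals are normally shown to vanish by a Fej\'er/monotonicity argument anchored at an existing solution --- precisely what is unavailable in the no-solution case, so the step ``bounded $\Rightarrow$ cluster point is KKT'' does not follow as sketched. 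The correct statement is the fixed-point-theoretic one packaged in \cite[Theorem 8]{eckstein:92:otd}: a summably perturbed firmly nonexpansive (Douglas--Rachford) iteration with a bounded orbit forces the underlying operator to have a fixed point. That is where the dichotomy actually comes from, and your argument would need to be replaced by (or reduced to) that fact.
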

\begin{proof} \label{pf:jour-14-fxr-supp:proof_inexact_lalm_type_1}
To prove this theorem, we first consider the exact linearized AL method:
\begin{equation} \label{eq:jour-14-fxr-supp:exact_lalm}
	\begin{cases}
	\iter{\mb{x}}{k+1}
	\in
	\argmin
	{\mb{x}}
	{\fx{h}{\mb{x}}+\bfx{\breve{\theta}_k}{\mb{x};\iter{\mb{x}}{k}}} \\
	\iter{\mb{u}}{k+1}
	\in
	\argmin
	{\mb{u}}
	{\fx{g}{\mb{u}}+\ts\frac{\rho}{2}\norm{\mb{A}\iter{\mb{x}}{k+1}-\mb{u}-\iter{\mb{d}}{k}}{2}^2} \\
	\iter{\mb{d}}{k+1}
	=
	\iter{\mb{d}}{k}-\mb{A}\iter{\mb{x}}{k+1}+\iter{\mb{u}}{k+1} \, .
	\end{cases}
\end{equation}
Note that
\begin{align} \label{eq:jour-13-fxr:sqs_al_penalty_proximal_form}
	\bfx{\breve{\theta}_k}{\mb{x};\iter{\mb{x}}{k}}
	&=
	\bfx{\theta_k}{\iter{\mb{x}}{k}}
	+
	\biprod{\nabla\bfx{\theta_k}{\iter{\mb{x}}{k}}}{\mb{x}-\iter{\mb{x}}{k}}
	+
	\ts\frac{\rho L}{2}\norm{\mb{x}-\iter{\mb{x}}{k}}{2}^2 \nonumber \\
	&=
	\bfx{\theta_k}{\iter{\mb{x}}{k}}
	+
	\biprod{\nabla\bfx{\theta_k}{\iter{\mb{x}}{k}}}{\mb{x}-\iter{\mb{x}}{k}}
	+
	\ts\frac{\rho}{2}\norm{\mb{x}-\iter{\mb{x}}{k}}{\mb{A}'\mb{A}}^2
	+
	\ts\frac{\rho}{2}\norm{\mb{x}-\iter{\mb{x}}{k}}{L\mb{I}-\mb{A}'\mb{A}}^2 \nonumber \\
	&=
	\fx{\theta_k}{\mb{x}}
	+
	\ts\frac{\rho}{2}\norm{\mb{x}-\iter{\mb{x}}{k}}{\mb{G}}^2 \, ,
\end{align}
where $\mb{G}\teq L\mb{I}-\mb{A}'\mb{A}\succ 0$. Therefore, the exact linearized AL method can also be written as
\begin{equation} \label{eq:jour-14-fxr-supp:exact_lalm_prox_pt}
	\begin{cases}
	\iter{\mb{x}}{k+1}
	\in
	\argmin
	{\mb{x}}
	{\fx{h}{\mb{x}}+\ts\frac{\rho}{2}\norm{\mb{Ax}-\iter{\mb{u}}{k}-\iter{\mb{d}}{k}}{2}^2+\ts\frac{\rho}{2}\norm{\mb{x}-\iter{\mb{x}}{k}}{\mb{G}}^2} \\
	\iter{\mb{u}}{k+1}
	\in
	\argmin
	{\mb{u}}
	{\fx{g}{\mb{u}}+\ts\frac{\rho}{2}\norm{\mb{A}\iter{\mb{x}}{k+1}-\mb{u}-\iter{\mb{d}}{k}}{2}^2} \\
	\iter{\mb{d}}{k+1}
	=
	\iter{\mb{d}}{k}-\mb{A}\iter{\mb{x}}{k+1}+\iter{\mb{u}}{k+1} \, .
	\end{cases}
\end{equation}
Now, consider another constrained minimization problem that is also equivalent to \eqref{eq:jour-14-fxr-supp:comp_conv_opt_prob} but uses two split variables:
\begin{equation} \label{eq:jour-14-fxr-supp:eq_comp_conv_opt_prob_with_two_splits}
	\left(\hat{\mb{x}},\hat{\mb{u}},\hat{\mb{v}}\right)
	\in
	\bargmin{\mb{x},\mb{u},\mb{v}}{\fx{g}{\mb{u}}+\fx{h}{\mb{x}}}
	\text{ s.t. }
	\begin{bmatrix}
	\mb{u} \\ \mb{v}
	\end{bmatrix}
	=
	\underbrace{
	\begin{bmatrix}
	\mb{A} \\ \mb{G}^{1/2}
	\end{bmatrix}}_{\mb{S}}
	\mb{x} \, .
\end{equation}
The corresponding augmented Lagrangian and ADMM iterates \cite{afonso:11:aal} are
\begin{equation} \label{eq:jour-14-fxr-supp:aug_lagrangian_with_two_splits}
	\fx{\LA}{\mb{x},\mb{u},\mb{d},\mb{v},\mb{e};\rho,\eta}
	\teq
	\fx{g}{\mb{u}}+\fx{h}{\mb{x}}+\ts\frac{\rho}{2}\norm{\mb{Ax}-\mb{u}-\mb{d}}{2}^2+\ts\frac{\eta}{2}\norm{\mb{G}^{1/2}\mb{x}-\mb{v}-\mb{e}}{2}^2
\end{equation}
and
\begin{equation} \label{eq:jour-14-fxr-supp:exact_admm_with_two_splits}
	\begin{cases}
	\iter{\mb{x}}{k+1}
	\in
	\argmin
	{\mb{x}}
	{\fx{h}{\mb{x}}+\ts\frac{\rho}{2}\norm{\mb{Ax}-\iter{\mb{u}}{k}-\iter{\mb{d}}{k}}{2}^2+\ts\frac{\eta}{2}\norm{\mb{G}^{1/2}\mb{x}-\iter{\mb{v}}{k}-\iter{\mb{e}}{k}}{2}^2} \\
	\iter{\mb{u}}{k+1}
	\in
	\argmin
	{\mb{u}}
	{\fx{g}{\mb{u}}+\ts\frac{\rho}{2}\norm{\mb{A}\iter{\mb{x}}{k+1}-\mb{u}-\iter{\mb{d}}{k}}{2}^2} \\
	\iter{\mb{d}}{k+1}
	=
	\iter{\mb{d}}{k}-\mb{A}\iter{\mb{x}}{k+1}+\iter{\mb{u}}{k+1} \\
	\iter{\mb{v}}{k+1}
	=
	\mb{G}^{1/2}\iter{\mb{x}}{k+1}-\iter{\mb{e}}{k} \\
	\iter{\mb{e}}{k+1}
	=
	\iter{\mb{e}}{k}-\mb{G}^{1/2}\iter{\mb{x}}{k+1}+\iter{\mb{v}}{k+1} \, ,
	\end{cases}
\end{equation}
where $\mb{e}$ is the scaled Lagrange multiplier of the split variable $\mb{v}$, and $\eta>0$ is the corresponding AL penalty parameter. Note that since $\mb{G}$ is positive definite, $\mb{S}$ defined in \eqref{eq:jour-14-fxr-supp:eq_comp_conv_opt_prob_with_two_splits} has full column rank. Hence, the ADMM iterates \eqref{eq:jour-14-fxr-supp:exact_admm_with_two_splits} are convergent \mbox{\cite[Theorem 8]{eckstein:92:otd}}. Solving the last two iterates in \eqref{eq:jour-14-fxr-supp:exact_admm_with_two_splits} yields identities
\begin{equation} \label{eq:jour-14-fxr-supp:al_identity_of_redundant_split}
	\begin{cases}
	\iter{\mb{v}}{k+1}=\mb{G}^{1/2}\iter{\mb{x}}{k+1} \\
	\iter{\mb{e}}{k+1}=\mb{0}
	\end{cases}
\end{equation}
if we initialize $\mb{e}$ as $\iter{\mb{e}}{0}=\mb{0}$. Substituting \eqref{eq:jour-14-fxr-supp:al_identity_of_redundant_split} into \eqref{eq:jour-14-fxr-supp:exact_admm_with_two_splits}, we have the equivalent ADMM iterates:
\begin{equation} \label{eq:jour-14-fxr-supp:eq_exact_admm_with_two_splits}
	\begin{cases}
	\iter{\mb{x}}{k+1}
	\in
	\argmin
	{\mb{x}}
	{\fx{h}{\mb{x}}+\ts\frac{\rho}{2}\norm{\mb{Ax}-\iter{\mb{u}}{k}-\iter{\mb{d}}{k}}{2}^2+\ts\frac{\eta}{2}\norm{\mb{G}^{1/2}\mb{x}-\mb{G}^{1/2}\iter{\mb{x}}{k}}{2}^2} \\
	\iter{\mb{u}}{k+1}
	\in
	\argmin
	{\mb{u}}
	{\fx{g}{\mb{u}}+\ts\frac{\rho}{2}\norm{\mb{A}\iter{\mb{x}}{k+1}-\mb{u}-\iter{\mb{d}}{k}}{2}^2} \\
	\iter{\mb{d}}{k+1}
	=
	\iter{\mb{d}}{k}-\mb{A}\iter{\mb{x}}{k+1}+\iter{\mb{u}}{k+1} \, .
	\end{cases}
\end{equation}
When $\eta=\rho$, the equivalent ADMM iterates \eqref{eq:jour-14-fxr-supp:eq_exact_admm_with_two_splits} reduce to \eqref{eq:jour-14-fxr-supp:exact_lalm_prox_pt}. Therefore, the linearized AL method is a convergent ADMM! Finally, by using \cite[Theorem 8]{eckstein:92:otd}, the linearized AL method is convergent if the error of $\mb{x}$-update is summable. That is, the inexact linearized AL method \eqref{eq:jour-14-fxr-supp:inexact_lalm_type_1} is convergent if the non-negative sequence $\left\{\delta_k\right\}_{k=0}^{\infty}$ satisfies $\sum_{k=0}^{\infty}\delta_k < \infty$.
\end{proof}

\subsection{Proof of Theorem 2} \label{subsec:jour-14-fxr-supp:proof_thm_2}
\begin{theorem} \label{thm:jour-14-fxr-supp:inexact_lalm_type_2}
Consider a minimax problem \eqref{eq:jour-14-fxr-supp:eq_comp_conv_opt_prob_minimax} where both $g$ and $h$ are closed and proper convex functions. Suppose it has a saddle-point $\left(\hat{\mb{z}},\hat{\mb{x}}\right)$. Note that since the minimization problem \eqref{eq:jour-14-fxr-supp:comp_conv_opt_prob} happens to be the dual problem of \eqref{eq:jour-14-fxr-supp:eq_comp_conv_opt_prob_minimax}, $\hat{\mb{x}}$ is also a solution of \eqref{eq:jour-14-fxr-supp:comp_conv_opt_prob}. Let $\rho>0$ and $\left\{\varepsilon_k\right\}_{k=0}^{\infty}$ denote a non-negative sequence such that
\begin{equation} \label{eq:jour-14-fxr-supp:error_bound_type_2}
	\sum_{k=0}^{\infty}\sqrt{\varepsilon_k}<\infty \, .
\end{equation}
Then, the sequence of updates $\left\{\left({-\rho}\iter{\mb{d}}{k},\iter{\mb{x}}{k}\right)\right\}_{k=0}^{\infty}$ generated by the inexact linearized AL method \eqref{eq:jour-14-fxr-supp:inexact_lalm_type_2} is a bounded sequence that converges to $\left(\hat{\mb{z}},\hat{\mb{x}}\right)$, and the primal-dual gap of $\left(\mb{z}_k,\mb{x}_k\right)$ has the following bound:
\begin{equation} \label{eq:jour-14-fxr-supp:primal_dual_gap_bound_with_error}
	\bfx{\Omega}{\mb{z}_k,\hat{\mb{x}}}-\bfx{\Omega}{\hat{\mb{z}},\mb{x}_k}
	\leq
	\frac{\left(C+2A_k+\sqrt{B_k}\right)^2}{k} \, ,
\end{equation}
where $\mb{z}_k\teq\frac{1}{k}\sum_{j=1}^k\big({-\rho}\iter{\mb{d}}{j}\big)$, $\mb{x}_k\teq\frac{1}{k}\sum_{j=1}^k\iter{\mb{x}}{j}$,
\begin{equation} \label{eq:jour-14-fxr-supp:def_C}
	C\teq\frac{\norm{\iter{\mb{x}}{0}-\hat{\mb{x}}}{2}}{\sqrt{2\rho^{-1}t}}+\frac{\norm{\big({-\rho}\iter{\mb{d}}{0}\big)-\hat{\mb{z}}}{2}}{\sqrt{2\rho}} \, ,
\end{equation}
\begin{equation} \label{eq:jour-14-fxr-supp:def_Ak}
	A_k\teq\sum_{j=1}^k\sqrt{\frac{\varepsilon_{j-1}}{\big(1-t\norm{\mb{A}}{2}^2\big)\rho^{-1}t}} \, ,
\end{equation}
and
\begin{equation} \label{eq:jour-14-fxr-supp:def_Bk}
	B_k\teq\sum_{j=1}^k\varepsilon_{j-1} \, .
\end{equation}
\end{theorem}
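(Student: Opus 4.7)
The plan is to exploit the equivalence, noted before the theorem, between the inexact linearized AL method \eqref{eq:jour-14-fxr-supp:inexact_lalm_type_2} and an inexact version of CPPDA \eqref{eq:jour-14-fxr-supp:general_cppd_iterates} with the identification $\iter{\mb{z}}{k}=-\rho\iter{\mb{d}}{k}$, $\sigma=\rho^{-1}t$, and $\tau=\rho$, and then adapt the primal-dual gap analysis of \cite{chambolle:11:afo} to the perturbed primal update.

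First I would convert the function-value error into an iterate error. Since the SQS term in \eqref{eq:jour-14-fxr-supp:sqs_quad_al_penalty} contributes $\tfrac{\rho L}{2}\snorm{\mb{x}-\iter{\mb{x}}{k}}{2}^2$ and $h$ is convex, $\phi_k$ is $\rho L=1/\sigma$-strongly convex, so the $\varepsilon_k$-accurate minimizer satisfies $\snorm{\iter{\mb{x}}{k+1}-\iter{\tilde{\mb{x}}}{k+1}}{2}\leq\sqrt{2\sigma\varepsilon_k}=\sqrt{2\rho^{-1}t\,\varepsilon_k}$, where $\iter{\tilde{\mb{x}}}{k+1}$ denotes the exact proximal step that the unperturbed CPPDA primal update would return. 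This lets me treat $\iter{\mb{x}}{k+1}$ as an exact CPPDA update plus a controlled perturbation.

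Next I would reproduce the per-iteration Chambolle-Pock inequality carrying this perturbation through. Combining the (inexact) proximal optimality condition for $\iter{\mb{x}}{k+1}$ with the (exact) one for $\iter{\mb{z}}{k+1}$, and testing against an arbitrary $(\mb{x},\mb{z})$, yields an inequality of the form
\[
\fx{\Omega}{\iter{\mb{z}}{k+1},\mb{x}}-\fx{\Omega}{\mb{z},\iter{\mb{x}}{k+1}}\leq D_k(\mb{x},\mb{z})-D_{k+1}(\mb{x},\mb{z})+R_k,
\]
where $D_k(\mb{x},\mb{z})=\tfrac{1}{2\sigma}\snorm{\mb{x}-\iter{\mb{x}}{k}}{2}^2+\tfrac{1}{2\tau}\snorm{\mb{z}-\iter{\mb{z}}{k}}{2}^2$ and $R_k$ collects the extrapolation cross-term and the inner products generated by the perturbation $\iter{\mb{x}}{k+1}-\iter{\tilde{\mb{x}}}{k+1}$. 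Using $\sigma\tau\snorm{\mb{A}}{2}^2=t\snorm{\mb{A}}{2}^2<1$ absorbs the cross-term into an effective metric with leading coefficient $(1-t\snorm{\mb{A}}{2}^2)/(2\sigma)$, which is exactly the factor appearing in the denominator of $A_k$ in \eqref{eq:jour-14-fxr-supp:def_Ak}. Cauchy--Schwarz on the perturbation cross terms bounds them by $\sqrt{\varepsilon_k/((1-t\snorm{\mb{A}}{2}^2)\rho^{-1}t)}$ times the effective distance, plus an $\varepsilon_k$-sized residual that aggregates into $B_k$.

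Telescoping over $j=0,\ldots,k-1$ with the comparison pair $(\hat{\mb{x}},\hat{\mb{z}})$ produces a recursion $E_{k+1}^2\leq C^2+\sum_{j=0}^{k}\alpha_j E_{j+1}+\sum_{j=0}^{k}\varepsilon_j$, where $E_k$ is the effective distance of $({-\rho}\iter{\mb{d}}{k},\iter{\mb{x}}{k})$ to $(\hat{\mb{z}},\hat{\mb{x}})$ and $\alpha_j=2\sqrt{\varepsilon_j/((1-t\snorm{\mb{A}}{2}^2)\rho^{-1}t)}$. Applying the standard square-root recursion lemma for quasi-Fej\'er sequences with perturbations (of Schmidt--Le Roux--Bach type) yields $E_k\leq C+2A_k+\sqrt{B_k}$, which in particular proves boundedness of $\{({-\rho}\iter{\mb{d}}{k},\iter{\mb{x}}{k})\}$ since $A_\infty,B_\infty<\infty$ by \eqref{eq:jour-14-fxr-supp:error_bound_type_2}. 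Jensen's inequality applied to the convex-concave $\Omega$ on the averages $\mb{z}_k,\mb{x}_k$ then delivers \eqref{eq:jour-14-fxr-supp:primal_dual_gap_bound_with_error}, and convergence of the iterates themselves (not merely the averages) follows from the quasi-Fej\'er property combined with the fact that every cluster point has vanishing duality gap, via an Opial-type argument. The main obstacle I anticipate is the bookkeeping in the per-iteration inequality: threading the perturbation through each cross term involving $\mb{A}$ with the right prefactors, so that what emerges is exactly the form $(C+2A_k+\sqrt{B_k})^2/k$ rather than a larger constant.
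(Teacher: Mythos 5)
Your overall strategy is the one the paper actually follows: pass to the inexact CPPDA via $\mb{z}=-\rho\mb{d}$, $\sigma=\rho^{-1}t$, $\tau=\rho$; derive the per-iteration Chambolle--Pock inequality with the perturbation carried along; absorb the extrapolation cross-term with Young's inequality using $\sigma\tau\snorm{\mb{A}}{2}^2=t\snorm{\mb{A}}{2}^2<1$; telescope against $(\hat{\mb{z}},\hat{\mb{x}})$; invoke the Schmidt--Le\,Roux--Bach square-root recursion lemma to get $u_k\leq \widetilde{A}_k+(\,\cdots+\widetilde{B}_k+\widetilde{A}_k^2)^{1/2}$; and finish with convexity/Jensen for the averaged gap and the Chambolle--Pock Section 3.1 cluster-point argument for iterate convergence. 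All of that matches, including the identification of $D=1-t\snorm{\mb{A}}{2}^2$ as the source of the denominator in $A_k$.

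The one place where you deviate is also the one place where your plan has a genuine gap: how the function-value error $\varepsilon_k$ enters the per-iteration inequality. You propose to use the $\rho L$-strong convexity of $\phi_k$ to get $\snorm{\iter{\mb{x}}{k+1}-\iter{\tilde{\mb{x}}}{k+1}}{2}\leq\sqrt{2\sigma\varepsilon_k}$ and then treat $\iter{\mb{x}}{k+1}$ as ``exact prox plus perturbation.'' But the Chambolle--Pock inequality is built from a subgradient-type optimality condition \emph{at the point where the gap is evaluated}, and the gap $-\fx{\Omega}{\mb{z},\iter{\mb{x}}{j+1}}$ contains the bare term $\fx{h}{\iter{\mb{x}}{j+1}}$. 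If you write the exact inequality at the ghost point $\iter{\tilde{\mb{x}}}{j+1}$, you must then transfer $h$-values from $\iter{\tilde{\mb{x}}}{j+1}$ to $\iter{\mb{x}}{j+1}$, and $h$ is only assumed closed, proper, and convex --- not Lipschitz --- so $|\fx{h}{\iter{\mb{x}}{j+1}}-\fx{h}{\iter{\tilde{\mb{x}}}{j+1}}|$ is not controlled by the displacement bound alone. (It can be patched using $\fx{\phi_k}{\iter{\mb{x}}{k+1}}\leq\fx{\phi_k}{\iter{\tilde{\mb{x}}}{k+1}}+\varepsilon_k$ and the quadratic structure of $\breve{\theta}_k$, but that drags iterate-dependent gradient norms into the constants and will not reproduce $(C+2A_k+\sqrt{B_k})^2/k$ cleanly.) The paper avoids the ghost sequence entirely by using the other half of Schmidt \emph{et al.}'s Lemma~2: the inexact prox is characterized by $\big(\iter{\mb{x}}{j}-\sigma\mb{A}'\iter{\bar{\mb{z}}}{j}\big)-\iter{\mb{x}}{j+1}-\iter{\mb{f}}{j}\in\partial_{\varepsilon_j}\left(\sigma h\right)\big(\iter{\mb{x}}{j+1}\big)$ with $\snorm{\iter{\mb{f}}{j}}{2}\leq\sqrt{2\varepsilon_j}$, together with the $\varepsilon$-subgradient inequality $\fx{h}{\mb{x}}\geq\fx{h}{\iter{\mb{x}}{j+1}}+\mb{s}'\big(\mb{x}-\iter{\mb{x}}{j+1}\big)-\varepsilon_j$. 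This keeps every step anchored at the actual iterate, and the entire cost of inexactness enters as the single extra term $-\ts\frac{\sqrt{2\varepsilon_j}}{\sigma}\snorm{\mb{x}-\iter{\mb{x}}{j+1}}{2}-\varepsilon_j$, which is exactly what feeds $A_k$ and $B_k$. Replace your strong-convexity device with this characterization and the rest of your outline goes through as written.
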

\begin{proof} \label{pf:jour-14-fxr-supp:proof_inexact_lalm_type_2}
As mentioned before, the inexact linearized AL method is the inexact version of CPPDA with a specific choice of $\sigma$ and $\tau$ and a substitution $\mb{z}=-\tau\mb{d}$. Here, we just prove the convergence of the inexact CPPDA by extending the analysis in \cite{chambolle:11:afo}, and the inexact linearized AL method is simply a special case of the inexact CPPDA. However, since the proximal mapping in the $\mb{x}$-update of the inexact CPPDA is solved inexactly, the existing analysis is not applicable. To solve this problem, we adopt the error analysis technique developed in \cite{schmidt:11:cro}. We first define the inexact proximal mapping
\begin{equation} \label{eq:jour-14-fxr-supp:inexact_prox}
	\mb{u}
	\overset{\varepsilon}{\approx}
	\prox{\phi}{\mb{v}}
\end{equation}
to be the mapping that satisfies
\begin{equation} \label{eq:jour-14-fxr-supp:def_inexact_prox}
	\fx{\phi}{\mb{u}}+\ts\frac{1}{2}\norm{\mb{u}-\mb{v}}{2}^2
	\leq
	\varepsilon
	+
	\varmin
	{\bar{\mb{u}}}
	{\fx{\phi}{\bar{\mb{u}}}+\ts\frac{1}{2}\norm{\bar{\mb{u}}-\mb{v}}{2}^2} \, .
\end{equation}
Therefore, the inexact CPPDA is defined as
\begin{equation} \label{eq:jour-14-fxr-supp:inexact_cppd_type_2}
	\begin{cases}
	\iter{\mb{x}}{k+1}
	\overset{\varepsilon_k}{\approx}
	\prox{\sigma h}{\iter{\mb{x}}{k}-\sigma\mb{A}'\iter{\bar{\mb{z}}}{k}} \\
	\iter{\mb{z}}{k+1}
	\in
	\prox{\tau g^*}{\iter{\mb{z}}{k}+\tau\mb{A}\iter{\mb{x}}{k+1}} \\
	\iter{\bar{\mb{z}}}{k+1}
	=
	\iter{\mb{z}}{k+1}+\left(\iter{\mb{z}}{k+1}-\iter{\mb{z}}{k}\right)
	\end{cases}
\end{equation}
with $\sigma\tau\norm{\mb{A}}{2}^2<1$. One can verify that with $\mb{z}=-\tau\mb{d}$, $\sigma=\rho^{-1}t$, and $\tau=\rho$, the inexact CPPDA \eqref{eq:jour-14-fxr-supp:inexact_cppd_type_2} is equivalent to the inexact linearized AL method \eqref{eq:jour-14-fxr-supp:inexact_lalm_type_2}. Schmidt \textit{et al.} showed that
\begin{equation} \label{eq:jour-14-fxr-supp:opt_cond_prox_with_error}
	\mb{u}
	\overset{\varepsilon}{\approx}
	\prox{\phi}{\mb{v}}
	\Leftrightarrow
	\mb{v}-\mb{u}-\mb{f}\in\partial_{\varepsilon}\fx{\phi}{\mb{u}}
\end{equation}
with $\norm{\mb{f}}{2}\leq\sqrt{2\varepsilon}$, and for any $\mb{s}\in\partial_{\varepsilon}\fx{\phi}{\mb{u}}$,
\begin{equation} \label{eq:jour-14-fxr-supp:linear_lower_bd_prox_with_error}
	\fx{\phi}{\mb{w}}\geq\fx{\phi}{\mb{u}}+\mb{s}'\left(\mb{w}-\mb{u}\right)-\varepsilon
\end{equation}
for all $\mb{w}$, where $\partial_{\varepsilon}\fx{\phi}{\mb{u}}$ denotes the $\varepsilon$-subdifferential of $\phi$ at $\mb{u}$ \cite[Lemma 2]{schmidt:11:cro}. When $\varepsilon=0$, \eqref{eq:jour-14-fxr-supp:opt_cond_prox_with_error} and \eqref{eq:jour-14-fxr-supp:linear_lower_bd_prox_with_error} reduce to the standard optimality condition of a proximal mapping and the definition of subgradient, respectively. At the $j$th iteration, $j=0,\ldots,k-1$, the updates generated by the inexact CPPDA \eqref{eq:jour-14-fxr-supp:inexact_cppd_type_2} satisfy
\begin{equation} \label{eq:jour-14-fxr-supp:inexact_cppda_type_2_opt_cond}
	\begin{cases}
	\big(\iter{\mb{x}}{j}-\sigma\mb{A}'\iter{\bar{\mb{z}}}{j}\big)-\iter{\mb{x}}{j+1}-\iter{\mb{f}}{j}
	\in
	\partial_{\varepsilon_j}\left(\sigma h\right)\big(\iter{\mb{x}}{j+1}\big) \\
	\big(\iter{\mb{z}}{j}+\tau\mb{A}\iter{\mb{x}}{j+1}\big)-\iter{\mb{z}}{j+1}
	\in
	\partial\left(\tau g^*\right)\big(\iter{\mb{z}}{j+1}\big) \, .
	\end{cases}
\end{equation}
In other words,
\begin{equation} \label{eq:jour-14-fxr-supp:inexact_cppda_type_2_sg_x}
	\frac{\iter{\mb{x}}{j}-\iter{\mb{x}}{j+1}}{\sigma}-\mb{A}'\iter{\bar{\mb{z}}}{j}-\frac{\iter{\mb{f}}{j}}{\sigma}
	\in
	\partial_{\varepsilon_j}\bfx{h}{\iter{\mb{x}}{j+1}}
\end{equation}
and
\begin{equation} \label{eq:jour-14-fxr-supp:inexact_cppda_type_2_sg_z}
	\frac{\iter{\mb{z}}{j}-\iter{\mb{z}}{j+1}}{\tau}+\mb{A}\iter{\mb{x}}{j+1}
	\in
	\partial\bfx{g^*}{\iter{\mb{z}}{j+1}} \, ,
\end{equation}
where $\norm{\iter{\mb{f}}{j}}{2}\leq\sqrt{2\varepsilon_j}$. From \eqref{eq:jour-14-fxr-supp:inexact_cppda_type_2_sg_x}, we have
\begin{align}
	\fx{h}{\mb{x}}
	&\geq
	\bfx{h}{\iter{\mb{x}}{j+1}}+\biprod{\partial_{\varepsilon_j}\bfx{h}{\iter{\mb{x}}{j+1}}}{\mb{x}-\iter{\mb{x}}{j+1}}-\varepsilon_j \nonumber \\
	&=
	\bfx{h}{\iter{\mb{x}}{j+1}}
	+
	\biprod{\ts\frac{\iter{\mb{x}}{j}-\iter{\mb{x}}{j+1}}{\sigma}}{\mb{x}-\iter{\mb{x}}{j+1}}
	-
	\biprod{\mb{A}'\iter{\bar{\mb{z}}}{j}}{\mb{x}-\iter{\mb{x}}{j+1}}
	-
	\biprod{\ts\frac{\iter{\mb{f}}{j}}{\sigma}}{\mb{x}-\iter{\mb{x}}{j+1}}
	-
	\varepsilon_j \nonumber \\
	&=
	\bfx{h}{\iter{\mb{x}}{j+1}}
	+
	\ts\frac{1}{2\sigma}\big(\norm{\iter{\mb{x}}{j+1}-\mb{x}}{2}^2+\norm{\iter{\mb{x}}{j+1}-\iter{\mb{x}}{j}}{2}^2-\norm{\iter{\mb{x}}{j}-\mb{x}}{2}^2\big) \nonumber \\
	&\qquad
	+
	\biprod{{-\mb{A}}'\big(\iter{\bar{\mb{z}}}{j}-\iter{\mb{z}}{j+1}\big)}{\mb{x}-\iter{\mb{x}}{j+1}}
	+
	\biprod{{-\mb{A}}'\iter{\mb{z}}{j+1}}{\mb{x}-\iter{\mb{x}}{j+1}}
	-
	\biprod{\ts\frac{\iter{\mb{f}}{j}}{\sigma}}{\mb{x}-\iter{\mb{x}}{j+1}}
	-
	\varepsilon_j \nonumber \\
	&\geq
	\bfx{h}{\iter{\mb{x}}{j+1}}
	+
	\ts\frac{1}{2\sigma}\big(\norm{\iter{\mb{x}}{j+1}-\mb{x}}{2}^2+\norm{\iter{\mb{x}}{j+1}-\iter{\mb{x}}{j}}{2}^2-\norm{\iter{\mb{x}}{j}-\mb{x}}{2}^2\big) \nonumber \\
	&\qquad
	+
	\biprod{{-\mb{A}}'\big(\iter{\bar{\mb{z}}}{j}-\iter{\mb{z}}{j+1}\big)}{\mb{x}-\iter{\mb{x}}{j+1}}
	+
	\biprod{{-\mb{A}}'\iter{\mb{z}}{j+1}}{\mb{x}-\iter{\mb{x}}{j+1}}
	-
	\ts\frac{1}{\sigma}\norm{\iter{\mb{f}}{j}}{2}\norm{\mb{x}-\iter{\mb{x}}{j+1}}{2}
	-
	\varepsilon_j \nonumber \\
	&\geq
	\bfx{h}{\iter{\mb{x}}{j+1}}
	+
	\ts\frac{1}{2\sigma}\big(\norm{\iter{\mb{x}}{j+1}-\mb{x}}{2}^2+\norm{\iter{\mb{x}}{j+1}-\iter{\mb{x}}{j}}{2}^2-\norm{\iter{\mb{x}}{j}-\mb{x}}{2}^2\big) \nonumber \\
	&\qquad
	+
	\biprod{{-\mb{A}}'\big(\iter{\bar{\mb{z}}}{j}-\iter{\mb{z}}{j+1}\big)}{\mb{x}-\iter{\mb{x}}{j+1}}
	+
	\biprod{{-\mb{A}}'\iter{\mb{z}}{j+1}}{\mb{x}-\iter{\mb{x}}{j+1}}
	-
	\ts\frac{\sqrt{2\varepsilon_j}}{\sigma}\norm{\mb{x}-\iter{\mb{x}}{j+1}}{2}
	-
	\varepsilon_j \label{eq:jour-14-fxr-supp:linear_lower_bd_x}
\end{align}
for any $\mb{x}\in\text{Dom}\,h$. From \eqref{eq:jour-14-fxr-supp:inexact_cppda_type_2_sg_z}, we have
\begin{align}
	\fx{g^*}{\mb{z}}
	&\geq
	\bfx{g^*}{\iter{\mb{z}}{j+1}}+\biprod{\partial\bfx{g^*}{\iter{\mb{z}}{j+1}}}{\mb{z}-\iter{\mb{z}}{j+1}} \nonumber \\
	&=
	\bfx{g^*}{\iter{\mb{z}}{j+1}}
	+
	\biprod{\ts\frac{\iter{\mb{z}}{j}-\iter{\mb{z}}{j+1}}{\tau}}{\mb{z}-\iter{\mb{z}}{j+1}}
	+
	\biprod{\mb{A}\iter{\mb{x}}{j+1}}{\mb{z}-\iter{\mb{z}}{j+1}} \nonumber \\
	&=
	\bfx{g^*}{\iter{\mb{z}}{j+1}}
	+
	\ts\frac{1}{2\tau}\big(\norm{\iter{\mb{z}}{j+1}-\mb{z}}{2}^2+\norm{\iter{\mb{z}}{j+1}-\iter{\mb{z}}{j}}{2}^2-\norm{\iter{\mb{z}}{j}-\mb{z}}{2}^2\big)
	-
	\biprod{{-\mb{A}}'\big(\mb{z}-\iter{\mb{z}}{j+1}\big)}{\iter{\mb{x}}{j+1}} \label{eq:jour-14-fxr-supp:linear_low_bd_z}
\end{align}
for any $\mb{z}\in\text{Dom}\,g^*$. Summing \eqref{eq:jour-14-fxr-supp:linear_lower_bd_x} and \eqref{eq:jour-14-fxr-supp:linear_low_bd_z}, it follows:
\begin{align}
	&
	\frac{\norm{\iter{\mb{x}}{j}-\mb{x}}{2}^2}{2\sigma}+\frac{\norm{\iter{\mb{z}}{j}-\mb{z}}{2}^2}{2\tau}
	\geq
	\left(\bfx{\Omega}{\iter{\mb{z}}{j+1},\mb{x}}-\bfx{\Omega}{\mb{z},\iter{\mb{x}}{j+1}}\right) \nonumber \\
	&\qquad\qquad
	+
	\frac{\norm{\iter{\mb{x}}{j+1}-\mb{x}}{2}^2}{2\sigma}+\frac{\norm{\iter{\mb{z}}{j+1}-\mb{z}}{2}^2}{2\tau}
	+
	\frac{\norm{\iter{\mb{x}}{j+1}-\iter{\mb{x}}{j}}{2}^2}{2\sigma}+\frac{\norm{\iter{\mb{z}}{j+1}-\iter{\mb{z}}{j}}{2}^2}{2\tau} \nonumber \\
	&\qquad\qquad
	+
	\biprod{{-\mb{A}}'\big(\iter{\bar{\mb{z}}}{j}-\iter{\mb{z}}{j+1}\big)}{\mb{x}-\iter{\mb{x}}{j+1}}
	-
	\ts\frac{\sqrt{2\varepsilon_j}}{\sigma}\norm{\mb{x}-\iter{\mb{x}}{j+1}}{2}
	-
	\varepsilon_j \, . \label{eq:jour-14-fxr-supp:main_ineq_step_1}
\end{align}
Furthermore,
\begin{align}
	&\,\,\,\,\,\,\,\,
	\biprod{{-\mb{A}}'\big(\iter{\bar{\mb{z}}}{j}-\iter{\mb{z}}{j+1}\big)}{\mb{x}-\iter{\mb{x}}{j+1}} \nonumber \\
	&=
	\biprod{{-\mb{A}}'\big(\iter{\mb{z}}{j+1}-2\iter{\mb{z}}{j}+\iter{\mb{z}}{j-1}\big)}{\iter{\mb{x}}{j+1}-\mb{x}} \nonumber \\
	&=
	\biprod{{-\mb{A}}'\big(\iter{\mb{z}}{j+1}-\iter{\mb{z}}{j}\big)}{\iter{\mb{x}}{j+1}-\mb{x}}
	-
	\biprod{{-\mb{A}}'\big(\iter{\mb{z}}{j}-\iter{\mb{z}}{j-1}\big)}{\iter{\mb{x}}{j}-\mb{x}}
	-
	\biprod{{-\mb{A}}'\big(\iter{\mb{z}}{j}-\iter{\mb{z}}{j-1}\big)}{\iter{\mb{x}}{j+1}-\iter{\mb{x}}{j}} \nonumber \\
	&\geq
	\biprod{{-\mb{A}}'\big(\iter{\mb{z}}{j+1}-\iter{\mb{z}}{j}\big)}{\iter{\mb{x}}{j+1}-\mb{x}}
	-
	\biprod{{-\mb{A}}'\big(\iter{\mb{z}}{j}-\iter{\mb{z}}{j-1}\big)}{\iter{\mb{x}}{j}-\mb{x}}
	-
	\norm{\mb{A}}{2}\bnorm{\iter{\mb{z}}{j}-\iter{\mb{z}}{j-1}}{2}\bnorm{\iter{\mb{x}}{j+1}-\iter{\mb{x}}{j}}{2} \nonumber \\
	&\geq
	\biprod{{-\mb{A}}'\big(\iter{\mb{z}}{j+1}-\iter{\mb{z}}{j}\big)}{\iter{\mb{x}}{j+1}-\mb{x}}
	-
	\biprod{{-\mb{A}}'\big(\iter{\mb{z}}{j}-\iter{\mb{z}}{j-1}\big)}{\iter{\mb{x}}{j}-\mb{x}} \nonumber \\
	&\qquad\qquad\qquad\qquad\qquad\qquad\qquad\qquad\qquad\qquad\qquad\quad
	-
	\norm{\mb{A}}{2}
	\big(
	\ts\frac{\sqrt{\sigma/\tau}}{2}\bnorm{\iter{\mb{z}}{j}-\iter{\mb{z}}{j-1}}{2}^2
	+
	\ts\frac{1}{2\sqrt{\sigma/\tau}}\bnorm{\iter{\mb{x}}{j+1}-\iter{\mb{x}}{j}}{2}^2
	\big) \label{eq:jour-14-fxr-supp:young_ineq} \\
	&\geq
	\biprod{{-\mb{A}}'\big(\iter{\mb{z}}{j+1}-\iter{\mb{z}}{j}\big)}{\iter{\mb{x}}{j+1}-\mb{x}}
	-
	\biprod{{-\mb{A}}'\big(\iter{\mb{z}}{j}-\iter{\mb{z}}{j-1}\big)}{\iter{\mb{x}}{j}-\mb{x}} \nonumber \\
	&\qquad\qquad\qquad\qquad\qquad\qquad\qquad\qquad\qquad\qquad\qquad\qquad\quad
	-
	\sqrt{\sigma\tau}\norm{\mb{A}}{2}
	\left(\frac{\bnorm{\iter{\mb{z}}{j}-\iter{\mb{z}}{j-1}}{2}^2}{2\tau}+\frac{\bnorm{\iter{\mb{x}}{j+1}-\iter{\mb{x}}{j}}{2}^2}{2\sigma}\right)
	\, , \label{eq:report-13-tca:inner_prod_ineq}
\end{align}
where \eqref{eq:jour-14-fxr-supp:young_ineq} is due to Young's inequality. Plugging \eqref{eq:report-13-tca:inner_prod_ineq} into \eqref{eq:jour-14-fxr-supp:main_ineq_step_1}, it follows that for any $\left(\mb{z},\mb{x}\right)$,
\begin{align}
	&
	\frac{\norm{\iter{\mb{x}}{j}-\mb{x}}{2}^2}{2\sigma}+\frac{\norm{\iter{\mb{z}}{j}-\mb{z}}{2}^2}{2\tau}
	\geq
	\left(\bfx{\Omega}{\iter{\mb{z}}{j+1},\mb{x}}-\bfx{\Omega}{\mb{z},\iter{\mb{x}}{j+1}}\right)
	+
	\frac{\norm{\iter{\mb{x}}{j+1}-\mb{x}}{2}^2}{2\sigma}+\frac{\norm{\iter{\mb{z}}{j+1}-\mb{z}}{2}^2}{2\tau} \nonumber \\
	&\qquad\qquad
	+
	\left(1-\sqrt{\sigma\tau}\norm{\mb{A}}{2}\right)\frac{\norm{\iter{\mb{x}}{j+1}-\iter{\mb{x}}{j}}{2}^2}{2\sigma}
	+
	\frac{\norm{\iter{\mb{z}}{j+1}-\iter{\mb{z}}{j}}{2}^2}{2\tau}
	-
	\sqrt{\sigma\tau}\norm{\mb{A}}{2}\frac{\norm{\iter{\mb{z}}{j}-\iter{\mb{z}}{j-1}}{2}^2}{2\tau} \nonumber \\
	&\qquad\qquad
	+
	\biprod{{-\mb{A}}'\big(\iter{\mb{z}}{j+1}-\iter{\mb{z}}{j}\big)}{\iter{\mb{x}}{j+1}-\mb{x}}
	-
	\biprod{{-\mb{A}}'\big(\iter{\mb{z}}{j}-\iter{\mb{z}}{j-1}\big)}{\iter{\mb{x}}{j}-\mb{x}}
	-
	\ts\frac{\sqrt{2\varepsilon_j}}{\sigma}\norm{\mb{x}-\iter{\mb{x}}{j+1}}{2}
	-
	\varepsilon_j \, . \label{eq:jour-14-fxr-supp:main_ineq_step_2}
\end{align}
Suppose $\iter{\mb{z}}{-1}=\iter{\mb{z}}{0}$, i.e., $\iter{\bar{\mb{z}}}{0}=\iter{\mb{z}}{0}$. Summing up \eqref{eq:jour-14-fxr-supp:main_ineq_step_2} from $j=0,\ldots,k-1$ and using
\begin{equation} \label{eq:jour-14-fxr-supp:inner_prod_ineq2}
	\biprod{{-\mb{A}}'\big(\iter{\mb{z}}{k}-\iter{\mb{z}}{k-1}\big)}{\iter{\mb{x}}{k}-\mb{x}}
	\leq
	\frac{\norm{\iter{\mb{z}}{k}-\iter{\mb{z}}{k-1}}{2}^2}{2\tau}
	+
	\sigma\tau\norm{\mb{A}}{2}^2
	\frac{\norm{\iter{\mb{x}}{k}-\mb{x}}{2}^2}{2\sigma}
\end{equation}
as before, we have
\begin{align}
	&
	\sum_{j=1}^k\left(\bfx{\Omega}{\iter{\mb{z}}{j},\mb{x}}-\bfx{\Omega}{\mb{z},\iter{\mb{x}}{j}}\right)
	+
	\left(1-\sigma\tau\norm{\mb{A}}{2}^2\right)\frac{\norm{\iter{\mb{x}}{k}-\mb{x}}{2}^2}{2\sigma}
	+
	\frac{\norm{\iter{\mb{z}}{k}-\mb{z}}{2}^2}{2\tau} \nonumber \\
	&
	+
	\left(1-\sqrt{\sigma\tau}\norm{\mb{A}}{2}\right)\sum_{j=1}^k\frac{\norm{\iter{\mb{x}}{j}-\iter{\mb{x}}{j-1}}{2}^2}{2\sigma}
	+
	\left(1-\sqrt{\sigma\tau}\norm{\mb{A}}{2}\right)\sum_{j=1}^{k-1}\frac{\norm{\iter{\mb{z}}{j}-\iter{\mb{z}}{j-1}}{2}^2}{2\tau} \nonumber \\
	&\qquad\qquad\qquad\qquad\leq
	\frac{\norm{\iter{\mb{x}}{0}-\mb{x}}{2}^2}{2\sigma}
	+
	\frac{\norm{\iter{\mb{z}}{0}-\mb{z}}{2}^2}{2\tau}
	+
	\sum_{j=1}^k\varepsilon_{j-1}
	+
	\sum_{j=1}^k 2\sqrt{\ts\frac{\varepsilon_{j-1}}{\sigma}}\frac{\norm{\iter{\mb{x}}{j}-\mb{x}}{2}}{\sqrt{2\sigma}} \, . \label{eq:jour-14-fxr-supp:main_ineq_step_3}
\end{align}
Since $\sigma\tau\norm{\mb{A}}{2}^{2}<1$, we have $1-\sigma\tau\norm{\mb{A}}{2}^2>0$ and $1-\sqrt{\sigma\tau}\norm{\mb{A}}{2}>0$. If we choose $\left(\mb{z},\mb{x}\right)=\left(\hat{\mb{z}},\hat{\mb{x}}\right)$, the first term on the left-hand side of \eqref{eq:jour-14-fxr-supp:main_ineq_step_3} is the sum of $k$ non-negative primal-dual gaps, and all terms in \eqref{eq:jour-14-fxr-supp:main_ineq_step_3} are greater than or equal to zero. Let $D\teq1-\sigma\tau\norm{\mb{A}}{2}^2>0$. We have three inequalities:
\begin{equation} \label{eq:jour-14-fxr-supp:main_ineq_1}
	D\cdot\frac{\norm{\iter{\mb{x}}{k}-\hat{\mb{x}}}{2}^2}{2\sigma}
	\leq
	\frac{\norm{\iter{\mb{x}}{0}-\hat{\mb{x}}}{2}^2}{2\sigma}
	+
	\frac{\norm{\iter{\mb{z}}{0}-\hat{\mb{z}}}{2}^2}{2\tau}
	+
	\sum_{j=1}^k\varepsilon_{j-1}
	+
	\sum_{j=1}^k 2\sqrt{\ts\frac{\varepsilon_{j-1}}{\sigma}}\frac{\norm{\iter{\mb{x}}{j}-\hat{\mb{x}}}{2}}{\sqrt{2\sigma}} \, ,
\end{equation}
\begin{equation} \label{eq:jour-14-fxr-supp:main_ineq_2}
	D\cdot
	\left(\frac{\norm{\iter{\mb{x}}{k}-\hat{\mb{x}}}{2}^2}{2\sigma}+\frac{\norm{\iter{\mb{z}}{k}-\hat{\mb{z}}}{2}^2}{2\tau}\right)
	\leq
	\frac{\norm{\iter{\mb{x}}{0}-\hat{\mb{x}}}{2}^2}{2\sigma}
	+
	\frac{\norm{\iter{\mb{z}}{0}-\hat{\mb{z}}}{2}^2}{2\tau}
	+
	\sum_{j=1}^k\varepsilon_{j-1}
	+
	\sum_{j=1}^k 2\sqrt{\ts\frac{\varepsilon_{j-1}}{\sigma}}\frac{\norm{\iter{\mb{x}}{j}-\hat{\mb{x}}}{2}}{\sqrt{2\sigma}} \, ,
\end{equation}
and
\begin{equation} \label{eq:jour-14-fxr-supp:main_ineq_3}
	\sum_{j=1}^k\left(\bfx{\Omega}{\iter{\mb{z}}{j},\hat{\mb{x}}}-\bfx{\Omega}{\hat{\mb{z}},\iter{\mb{x}}{j}}\right)
	\leq
	\frac{\norm{\iter{\mb{x}}{0}-\hat{\mb{x}}}{2}^2}{2\sigma}
	+
	\frac{\norm{\iter{\mb{z}}{0}-\hat{\mb{z}}}{2}^2}{2\tau}
	+
	\sum_{j=1}^k\varepsilon_{j-1}
	+
	\sum_{j=1}^k 2\sqrt{\ts\frac{\varepsilon_{j-1}}{\sigma}}\frac{\norm{\iter{\mb{x}}{j}-\hat{\mb{x}}}{2}}{\sqrt{2\sigma}} \, .
\end{equation}
All these inequality has a common right-hand-side. To continue the proof, we have to bound $\norm{\iter{\mb{x}}{j}-\hat{\mb{x}}}{2}/\sqrt{2\sigma}$ first. Dividing $D$ from both sides of \eqref{eq:jour-14-fxr-supp:main_ineq_1}, we have
\begin{equation} \label{eq:jour-14-fxr-supp:main_ineq_4}
	\left(\frac{\norm{\iter{\mb{x}}{k}-\hat{\mb{x}}}{2}}{\sqrt{2\sigma}}\right)^2
	\leq
	\left(
	\frac{1}{D}\frac{\norm{\iter{\mb{x}}{0}-\hat{\mb{x}}}{2}^2}{2\sigma}
	+
	\frac{1}{D}\frac{\norm{\iter{\mb{z}}{0}-\hat{\mb{z}}}{2}^2}{2\tau}
	+
	\sum_{j=1}^k\frac{\varepsilon_{j-1}}{D}
	\right)
	+
	\sum_{j=1}^k 2\left(\frac{1}{D}\sqrt{\ts\frac{\varepsilon_{j-1}}{\sigma}}\right)\frac{\norm{\iter{\mb{x}}{j}-\hat{\mb{x}}}{2}}{\sqrt{2\sigma}} \, .
\end{equation}
Let
\begin{equation} \label{eq:jour-14-fxr-supp:def_Sk}
	S_k
	\teq
	\frac{1}{D}\frac{\norm{\iter{\mb{x}}{0}-\hat{\mb{x}}}{2}^2}{2\sigma}
	+
	\frac{1}{D}\frac{\norm{\iter{\mb{z}}{0}-\hat{\mb{z}}}{2}^2}{2\tau}
	+
	\sum_{j=1}^k\frac{\varepsilon_{j-1}}{D} \, ,
\end{equation}
\begin{equation} \label{eq:jour-14-fxr-supp:def_lambdaj}
	\lambda_j
	\teq
	2\left(\frac{1}{D}\sqrt{\ts\frac{\varepsilon_{j-1}}{\sigma}}\right) \, ,
\end{equation}
and
\begin{equation} \label{eq:jour-14-fxr-supp:def_uj}
	u_j
	\teq
	\frac{\norm{\iter{\mb{x}}{j}-\hat{\mb{x}}}{2}}{\sqrt{2\sigma}} \, .
\end{equation}
We have $u_k^2\leq S_k+\sum_{j=1}^k\lambda_j u_j$ from \eqref{eq:jour-14-fxr-supp:main_ineq_4} with $\left\{S_k\right\}_{k=0}^{\infty}$ an increasing sequence, $S_0\geq u_0^2$ (note that $0<D<1$ because $0<\sigma\tau\norm{\mb{A}}{2}^2<1$), and $\lambda_j\geq 0$ for all $j$. According to \cite[Lemma 1]{schmidt:11:cro}, it follows that
\begin{equation} \label{eq:jour-14-fxr-supp:bound_uk}
	\frac{\norm{\iter{\mb{x}}{k}-\hat{\mb{x}}}{2}}{\sqrt{2\sigma}}
	\leq
	\widetilde{A}_k
	+
	\left(
	\frac{1}{D}\frac{\norm{\iter{\mb{x}}{0}-\hat{\mb{x}}}{2}^2}{2\sigma}
	+
	\frac{1}{D}\frac{\norm{\iter{\mb{z}}{0}-\hat{\mb{z}}}{2}^2}{2\tau}
	+
	\widetilde{B}_k
	+
	\widetilde{A}_k^2
	\right)^{1/2} \, ,
\end{equation}
where
\begin{equation} \label{eq:jour-14-fxr-supp:def_tildeAk}
	\widetilde{A}_k
	\teq
	\sum_{j=1}^k\frac{1}{D}\sqrt{\ts\frac{\varepsilon_{j-1}}{\sigma}} \, ,
\end{equation}
and
\begin{equation} \label{eq:jour-14-fxr-supp:def_tildeBk}
	\widetilde{B}_k
	\teq
	\sum_{j=1}^k\frac{\varepsilon_{j-1}}{D} \, .
\end{equation}
Since $\widetilde{A}_j$ and $\widetilde{B}_j$ are increasing sequences of $j$, for $j\leq k$, we have
\begin{align}
	\frac{\norm{\iter{\mb{x}}{j}-\hat{\mb{x}}}{2}}{\sqrt{2\sigma}}
	&\leq
	\widetilde{A}_j
	+
	\left(
	\frac{1}{D}\frac{\norm{\iter{\mb{x}}{0}-\hat{\mb{x}}}{2}^2}{2\sigma}
	+
	\frac{1}{D}\frac{\norm{\iter{\mb{z}}{0}-\hat{\mb{z}}}{2}^2}{2\tau}
	+
	\widetilde{B}_j
	+
	\widetilde{A}_j^2
	\right)^{1/2} \nonumber \\
	&\leq
	\widetilde{A}_k
	+
	\left(
	\frac{1}{D}\frac{\norm{\iter{\mb{x}}{0}-\hat{\mb{x}}}{2}^2}{2\sigma}
	+
	\frac{1}{D}\frac{\norm{\iter{\mb{z}}{0}-\hat{\mb{z}}}{2}^2}{2\tau}
	+
	\widetilde{B}_k
	+
	\widetilde{A}_k^2
	\right)^{1/2} \nonumber \\
	&\leq
	\widetilde{A}_k
	+
	\left(
	\frac{1}{\sqrt{D}}\frac{\norm{\iter{\mb{x}}{0}-\hat{\mb{x}}}{2}}{\sqrt{2\sigma}}
	+
	\frac{1}{\sqrt{D}}\frac{\norm{\iter{\mb{z}}{0}-\hat{\mb{z}}}{2}}{\sqrt{2\tau}}
	+
	\sqrt{\widetilde{B}_k}
	+
	\widetilde{A}_k
	\right) \, . \label{eq:jour-14-fxr-supp:bound_uj}
\end{align}
Now, we can bound the right-hand-side of \eqref{eq:jour-14-fxr-supp:main_ineq_1}, \eqref{eq:jour-14-fxr-supp:main_ineq_2}, and \eqref{eq:jour-14-fxr-supp:main_ineq_3} as
\begin{align}
	&\,\,\,\,\,\,\,\,
	\frac{\norm{\iter{\mb{x}}{0}-\hat{\mb{x}}}{2}^2}{2\sigma}
	+
	\frac{\norm{\iter{\mb{z}}{0}-\hat{\mb{z}}}{2}^2}{2\tau}
	+
	\sum_{j=1}^k\varepsilon_{j-1}
	+
	\sum_{j=1}^k 2\sqrt{\ts\frac{\varepsilon_{j-1}}{\sigma}}\frac{\norm{\iter{\mb{x}}{j}-\hat{\mb{x}}}{2}}{\sqrt{2\sigma}} \nonumber \\
	&\leq
	\frac{\norm{\iter{\mb{x}}{0}-\hat{\mb{x}}}{2}^2}{2\sigma}
	+
	\frac{\norm{\iter{\mb{z}}{0}-\hat{\mb{z}}}{2}^2}{2\tau}
	+
	\sum_{j=1}^k\varepsilon_{j-1}
	+
	\sum_{j=1}^k2\sqrt{\ts\frac{\varepsilon_{j-1}}{\sigma}}
	\left(
	2\widetilde{A}_k
	+
	\frac{1}{\sqrt{D}}\frac{\norm{\iter{\mb{x}}{0}-\hat{\mb{x}}}{2}}{\sqrt{2\sigma}}
	+
	\frac{1}{\sqrt{D}}\frac{\norm{\iter{\mb{z}}{0}-\hat{\mb{z}}}{2}}{\sqrt{2\tau}}
	+
	\sqrt{\widetilde{B}_k}
	\right) \nonumber \\
	&=
	\frac{\norm{\iter{\mb{x}}{0}-\hat{\mb{x}}}{2}^2}{2\sigma}
	+
	\frac{\norm{\iter{\mb{z}}{0}-\hat{\mb{z}}}{2}^2}{2\tau}
	+
	\widetilde{B}_kD
	+
	2\widetilde{A}_kD
	\left(
	2\widetilde{A}_k
	+
	\frac{1}{\sqrt{D}}\frac{\norm{\iter{\mb{x}}{0}-\hat{\mb{x}}}{2}}{\sqrt{2\sigma}}
	+
	\frac{1}{\sqrt{D}}\frac{\norm{\iter{\mb{z}}{0}-\hat{\mb{z}}}{2}}{\sqrt{2\tau}}
	+
	\sqrt{\widetilde{B}_k}
	\right) \nonumber \\
	&\leq
	\left(
	\frac{\norm{\iter{\mb{x}}{0}-\hat{\mb{x}}}{2}}{\sqrt{2\sigma}}
	+
	\frac{\norm{\iter{\mb{z}}{0}-\hat{\mb{z}}}{2}}{\sqrt{2\tau}}
	+
	2\widetilde{A}_k\sqrt{D}
	+
	\sqrt{\widetilde{B}_kD}
	\right)^2 \nonumber \\
	&=
	\left(
	\frac{\norm{\iter{\mb{x}}{0}-\hat{\mb{x}}}{2}}{\sqrt{2\sigma}}
	+
	\frac{\norm{\iter{\mb{z}}{0}-\hat{\mb{z}}}{2}}{\sqrt{2\tau}}
	+
	2A_k
	+
	\sqrt{B_k}
	\right)^2 \label{eq:jour-14-fxr-supp:rhs_upper_bd_1} \\
	&\leq
	\left(
	\frac{\norm{\iter{\mb{x}}{0}-\hat{\mb{x}}}{2}}{\sqrt{2\sigma}}
	+
	\frac{\norm{\iter{\mb{z}}{0}-\hat{\mb{z}}}{2}}{\sqrt{2\tau}}
	+
	2A_{\infty}
	+
	\sqrt{B_{\infty}}
	\right)^2 \label{eq:jour-14-fxr-supp:rhs_upper_bd_2}
\end{align}
if $\left\{\sqrt{\varepsilon_k}\right\}_{k=0}^{\infty}$ is absolutely summable (and therefore, $\left\{\varepsilon_k\right\}_{k=0}^{\infty}$ is also absolutely summable), where
\begin{equation} 
	A_k
	\teq
	\widetilde{A}_k\sqrt{D}
	=
	\sum_{j=1}^k\sqrt{\ts\frac{\varepsilon_{j-1}}{\left(1-\sigma\tau\norm{\mb{A}}{2}^2\right)\sigma}} \, ,
\end{equation}
and
\begin{equation} 
	B_k
	\teq
	\widetilde{B}_kD
	=
	\sum_{j=1}^k\varepsilon_{j-1} \, .
\end{equation}
Hence, from \eqref{eq:jour-14-fxr-supp:main_ineq_2}, we have
\begin{equation} \label{eq:jour-14-fxr-supp:bdd_seq}
	\frac{\norm{\iter{\mb{x}}{k}-\hat{\mb{x}}}{2}^2}{2\sigma}+\frac{\norm{\iter{\mb{z}}{k}-\hat{\mb{z}}}{2}^2}{2\tau}
	\leq
	\frac{1}{D}
	\left(
	\frac{\norm{\iter{\mb{x}}{0}-\hat{\mb{x}}}{2}}{\sqrt{2\sigma}}
	+
	\frac{\norm{\iter{\mb{z}}{0}-\hat{\mb{z}}}{2}}{\sqrt{2\tau}}
	+
	2A_{\infty}
	+
	\sqrt{B_{\infty}}
	\right)^2
	<
	\infty \, .
\end{equation}
This implies that the sequence of updates $\left\{\left(\iter{\mb{z}}{k},\iter{\mb{x}}{k}\right)\right\}_{k=0}^{\infty}$ generated by the inexact CPPDA \eqref{eq:jour-14-fxr-supp:inexact_cppd_type_2} is a bounded sequence. Let
\begin{equation} 
	C
	\teq
	\frac{\norm{\iter{\mb{x}}{0}-\hat{\mb{x}}}{2}}{\sqrt{2\sigma}}
	+
	\frac{\norm{\iter{\mb{z}}{0}-\hat{\mb{z}}}{2}}{\sqrt{2\tau}} \, .
\end{equation}
From \eqref{eq:jour-14-fxr-supp:main_ineq_3} and the convexity of $h$ and $g^*$, we have
\begin{align}
	\bfx{\Omega}{\mb{z}_k,\hat{\mb{x}}}-\bfx{\Omega}{\hat{\mb{z}},\mb{x}_k}
	&\leq
	\frac{1}{k}\sum_{j=1}^k\left(\bfx{\Omega}{\iter{\mb{z}}{j},\hat{\mb{x}}}-\bfx{\Omega}{\hat{\mb{z}},\iter{\mb{x}}{j}}\right) \nonumber \\
	&\leq
	\frac{\left(C+2A_k+\sqrt{B_k}\right)^2}{k} \label{eq:jour-14-fxr-supp:primal_dual_gap_bd_1} \\
	&\leq
	\frac{\left(C+2A_{\infty}+\sqrt{B_{\infty}}\right)^2}{k} \label{eq:jour-14-fxr-supp:primal_dual_gap_bd_2} \, ,
\end{align}
where $\mb{z}_k\teq\frac{1}{k}\sum_{j=1}^k\iter{\mb{z}}{j}$, and $\mb{x}_k\teq\frac{1}{k}\sum_{j=1}^k\iter{\mb{x}}{j}$. That is, the primal-dual gap of $\left(\mb{z}_k,\mb{x}_k\right)$ converges to zero with rate $\fx{O}{1/k}$. Following the procedure in \cite[Section 3.1]{chambolle:11:afo}, we can further show that the sequence of updates $\left\{\left(\iter{\mb{z}}{k},\iter{\mb{x}}{k}\right)\right\}_{k=0}^{\infty}$ generated by the inexact CPPDA \eqref{eq:jour-14-fxr-supp:inexact_cppd_type_2} converges to a saddle-point of \eqref{eq:jour-14-fxr-supp:eq_comp_conv_opt_prob_minimax} if the dimension of $\mb{x}$ and $\mb{z}$ is finite.
\end{proof}

\section{Additional experimental results} \label{sec:jour-14-fxr:additional_expt}
\subsection{Shoulder scan with the Barzilai-Borwein acceleration} \label{subsec:jour-14-fxr:shoulder_bb}
\begin{figure}
	\centering
	\includegraphics[width=0.45\textwidth]{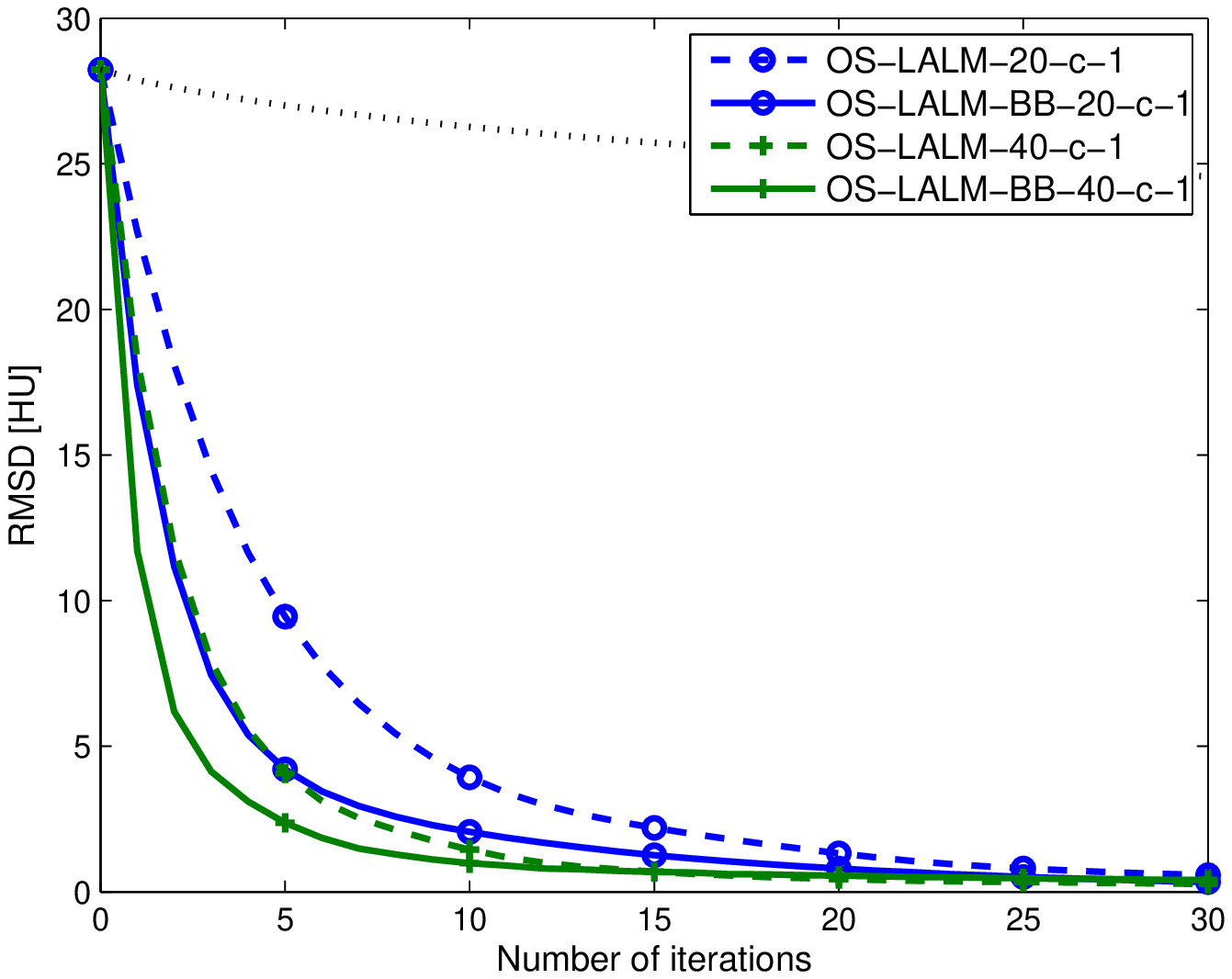}
	\caption{Shoulder scan: RMS differences between the reconstructed image $\iter{\mb{x}}{k}$ and the reference reconstruction $\mb{x}^{\star}$ as a function of iteration using the proposed algorithm without and with the Barzilai-Borwein acceleration. The dotted line shows the RMS differences using the standard OS algorithm with one subset as the baseline convergence rate.}
	\label{fig:jour-14-fxr-supp:shoulder_rmsd_bb}
\end{figure}
In this experiment, we demonstrated accelerating the proposed algorithm using the Barzilai-Borwein (spectral) method \cite{barzilai:88:tps} that mimics the Hessian $\mb{A}'\mb{WA}$ by $\mb{H}_k\teq\alpha_k\mb{L}_{\text{diag}}$. The scaling factor $\alpha_k$ is solved by fitting the secant equation:
\begin{equation} \label{eq:jour-14-fxr-supp:secant_eq}
	\mb{y}_k
	\approx
	\mb{H}_k\mb{s}_k
\end{equation}
in the weighted least-squares sense, i.e.,
\begin{equation} \label{eq:jour-14-fxr-supp:scale_factor_alpha_k}
	\alpha_k
	\in
	\nbargmin{\alpha\leq 1}{\ts\frac{1}{2}\norm{\mb{y}_k-\alpha\mb{L}_{\text{diag}}\mb{s}_k}{\mb{P}}^2}
\end{equation}
for some positive definite $\mb{P}$, where
\begin{equation}
	\mb{y}_k\teq\bfx{\nabla\ell}{\iter{\mb{x}}{k}}-\bfx{\nabla\ell}{\iter{\mb{x}}{k-1}}
\end{equation}
and
\begin{equation}
	\mb{s}_k\teq\iter{\mb{x}}{k}-\iter{\mb{x}}{k-1} \, .
\end{equation}
We choose $\mb{P}$ to be $\mb{L}_{\text{diag}}^{-1}$ since $\mb{L}_{\text{diag}}^{-1}$ is proportional to the step sizes of the voxels. By choose $\mb{P}=\mb{L}_{\text{diag}}^{-1}$, we are fitting the secant equation with more weight for voxels with larger step sizes. Note that applying the Barzilai-Borwein acceleration changes $\mb{H}_k$ every iteration, and the majorization condition does not necessarily hold. Hence, the convergence theorems developed in Section~\ref{sec:jour-14-fxr-supp:conv_ana_inexact_lalm} are not applicable. However, ordered-subsets (OS) based algorithms typically lack convergence proofs anyway, and we find that this acceleration works well in practice. Figure~\ref{fig:jour-14-fxr-supp:shoulder_rmsd_bb} shows the RMS differences between the reconstructed image $\iter{\mb{x}}{k}$ and the reference reconstruction $\mb{x}^{\star}$ of the shoulder scan dataset as a function of iteration using the proposed algorithm without and with the Barzilai-Borwein acceleration. As can be seen in Figure~\ref{fig:jour-14-fxr-supp:shoulder_rmsd_bb}, the proposed algorithm with both $M=20$ and $M=40$ shows roughly $2$-times acceleration in early iterations using the Barzilai-Borwein acceleration.

\subsection{Truncated abdomen scan} \label{subsec:jour-14-fxr:trunc}
In this experiment, we reconstructed a $600\times 600\times 239$ image from an abdomen region helical CT scan with transaxial truncation, where the sinogram has size $888\times 64\times 3516$ and pitch $1.0$. The maximum number of subsets suggested in \cite{nien:14:fxr} is about $20$. Figure~\ref{fig:jour-14-fxr:truncation_ini_ref_pro} shows the cropped images from the central transaxial plane of the initial FBP image, the reference reconstruction, and the reconstructed image using the proposed algorithm (\mbox{OS-LALM-$20$-c-$1$}) at the $30$th iteration. This experiment demonstrates how different OS-based algorithms behave when the number of subsets exceeds the suggested maximum number of subsets. Figure~\ref{fig:jour-14-fxr:truncation_diff_image} shows the difference images for different OS-based algorithms with $10$, $20$, and $40$ subsets. As can be seen in Figure~\ref{fig:jour-14-fxr:truncation_diff_image}, the proposed algorithm works best for $M=20$; when $M$ is larger ($M=40$), ripples and light OS artifacts appear. However, it is still much better than the standard OS+momentum algorithm \cite{kim:13:axr}. In fact, the OS artifacts in the reconstructed image using the standard OS+momentum algorithm with $40$ subsets are visible with the naked eye in the display window from $800$ to $1200$ HU. The convergence rate curves in Figure~\ref{fig:jour-14-fxr:truncation_rmsd} support our observation. In sum, the proposed algorithm exhibits fast convergence rate and excellent gradient error tolerance even in the case with truncation.

\begin{figure}
	\centering
	\includegraphics[width=\textwidth]{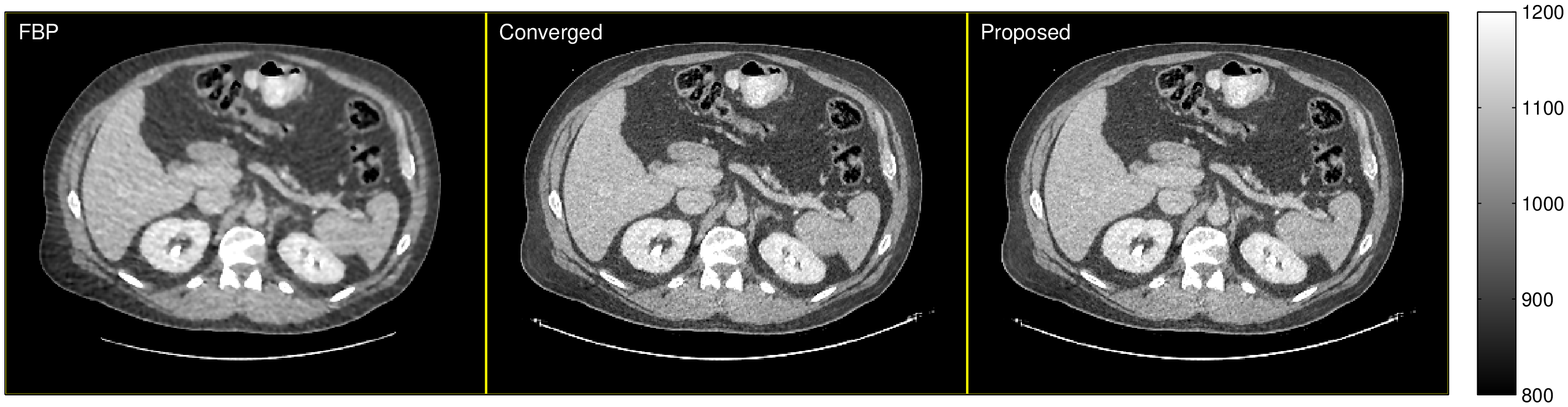}
	\caption{Truncated abdomen scan: cropped images (displayed from $800$ to $1200$ HU) from the central transaxial plane of the initial FBP image $\iter{\mb{x}}{0}$ (left), the reference reconstruction $\mb{x}^{\star}$ (center), and the reconstructed image using the proposed algorithm (\mbox{OS-LALM-$20$-c-$1$}) at the $30$th iteration $\iter{\mb{x}}{30}$ (right).}
	\label{fig:jour-14-fxr:truncation_ini_ref_pro}
\end{figure}

\begin{figure}
	\centering
	\includegraphics[width=\textwidth]{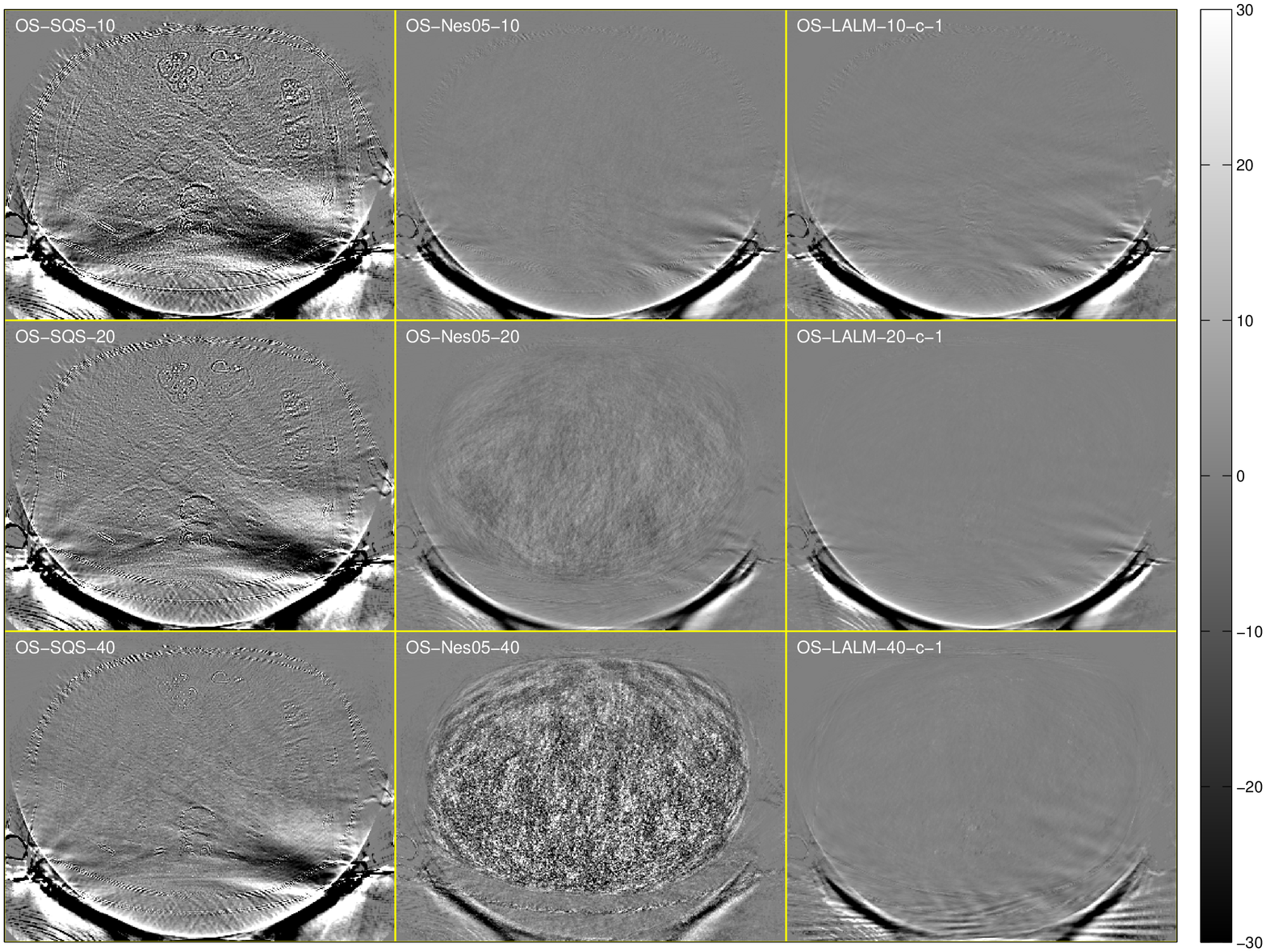}
	\caption{Truncated abdomen scan: cropped difference images (displayed from ${-30}$ to $30$ HU) from the central transaxial plane of $\iter{\mb{x}}{30}-\mb{x}^{\star}$ using OS-based algorithms.}
	\label{fig:jour-14-fxr:truncation_diff_image}
\end{figure}

\begin{figure}
	\centering
	\includegraphics[width=0.45\textwidth]{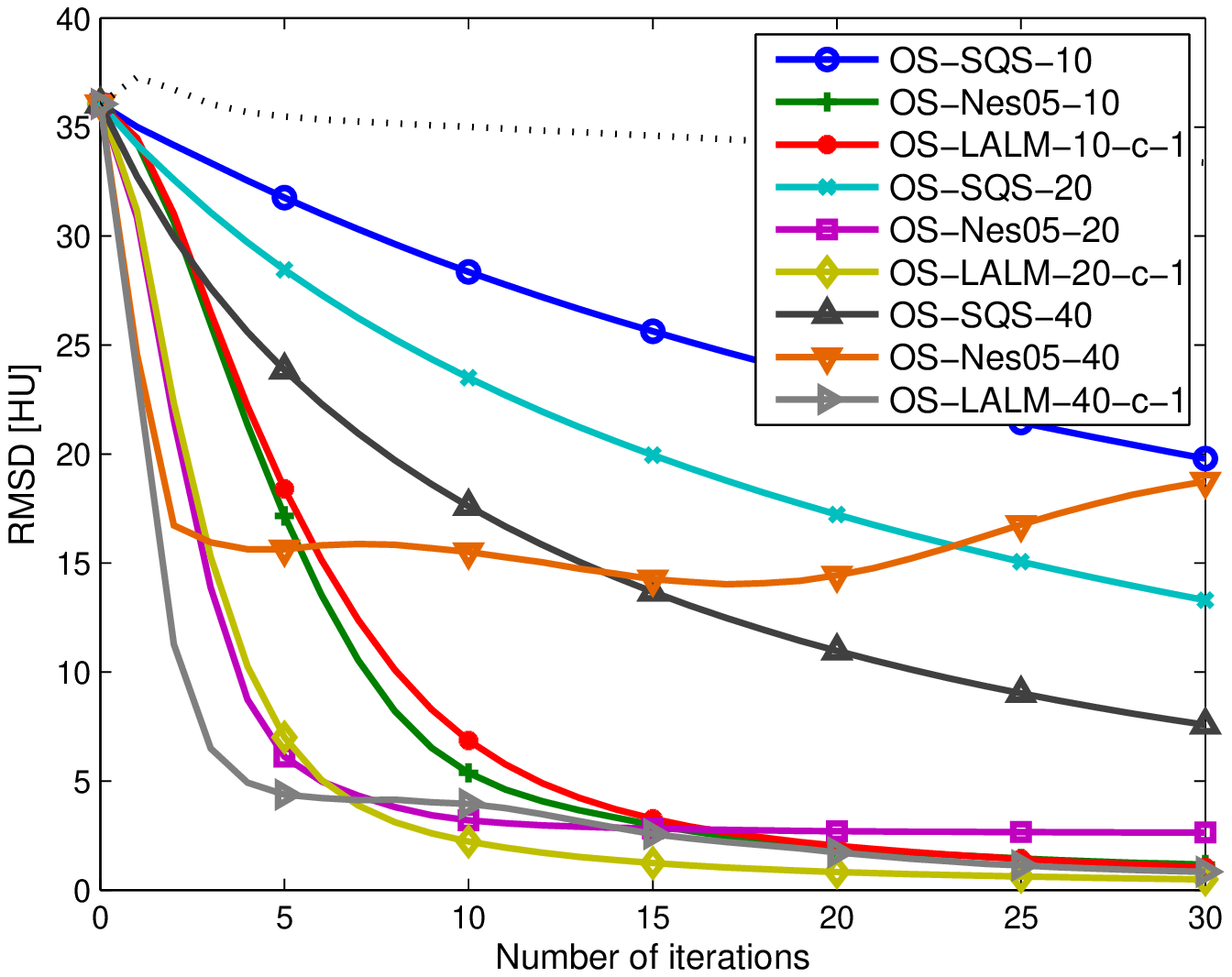}
	\caption{Truncated abdomen scan: RMS differences between the reconstructed image $\iter{\mb{x}}{k}$ and the reference reconstruction $\mb{x}^{\star}$ as a function of iteration using OS-based algorithms with $10$, $20$, and $40$ subsets. The dotted line shows the RMS differences using the standard OS algorithm with one subset as the baseline convergence rate.}
	\label{fig:jour-14-fxr:truncation_rmsd}
\end{figure}

\bibliographystyle{ieeetr}
\bibliography{../master}